\documentclass[a4paper,12pt]{amsart}
\usepackage{amsmath, amssymb, amsthm, xcolor}
\usepackage{verbatim}
\usepackage{xypic}
\usepackage{graphicx}
\usepackage{enumerate}
\usepackage[colorlinks=true, citecolor=blue, linkcolor=blue, bookmarks=true]{hyperref}

\usepackage{tikz}

\usetikzlibrary{intersections}

\newcounter{dummy}

\newtheorem{theorem}[dummy]{Theorem}
\newtheorem{lemma}[dummy]{Lemma}

\newtheorem{proposition}[dummy]{Proposition}

\newtheorem*{theorem*}{Theorem}

\theoremstyle{definition}
\newtheorem{definition}[dummy]{Definition}

\DeclareMathOperator{\id}{id}

\newcommand{\dimnuc}{\mathrm{dim}_\mathrm{nuc}}

\newcommand{\prim}{\mathrm{Prim}}

\newcommand{\N}{\mathbb{N}}
\newcommand{\M}{\mathbb{M}}
\newcommand{\C}{\mathbb{C}}
\newcommand{\K}{\mathbb{K}}
\renewcommand{\O}{\mathcal{O}}
\newcommand{\I}{\mathcal{I}}

\setlength{\jot}{7pt}

\begin{document}

\title{Nuclear dimension of extensions of $\O_\infty$-stable algebras}
\author{Samuel Evington}
	
\address{Mathematical Institute, University of M{\"u}nster, Einsteinstrasse 62, 48149 M{\"u}nster, Germany}
\email{evington@uni-muenster.de}

\thanks{The project was partially supported by the Deutsche Forschungsgemeinschaft (DFG, German Research Foundation) – Project-ID 427320536 – SFB 1442, as well as under Germany’s Excellence Strategy EXC 2044 390685587, Mathematics M{\"u}nster: Dynamics–Geometry–Structure, and also partially supported by ERC Advanced Grant 834267 - AMAREC - and by EPSRC grant EP/R025061/2.}

\maketitle

\begin{abstract}
	We obtain an improved upper bound for the nuclear dimension of extensions of $\O_\infty$-stable $\rm{C}^*$-algebras. In particular, we prove that the nuclear dimension of a full extension of an $\O_\infty$-stable $\rm{C}^*$-algebra by a stable AF algebra is one.
\end{abstract}

\section*{Introduction}

Nuclear dimension is a dimension theory for $\rm{C}^*$-algebras, introduced by Winter and Zacharias (\cite{WZ10}), defined in terms of coloured finite-dimensional approximations. In the commutative case, it reduces to a version of Lebesgue covering dimension, whereby a compact space is said to have dimension at most $n$ if every open cover has a refinement that can be $(n+1)$-coloured so that sets with the same colour are disjoint (\cite{Os65}). 

Nuclear dimension has close connections to the K-theoretic classification of separable $\rm{C}^*$-algebras. The zero dimensional objects are the approximately finite dimensional (AF) algebras, whose classification by Elliott via the $K_0$ group was a key motivation for the subsequent classification programme for nuclear $\rm{C}^*$-algebras (\cite{El76, El95}). Moreover, finite nuclear dimension has emerged as a fundamental regularity property for simple, separable, nuclear $\rm{C}^*$-algebras (\cite{ET08}), which separates the class of classifiable $\rm{C}^*$-algebras from pathological counter-examples (\cite{To08,Ro03,Vi98}). This has now crystallised in the classification theorem for simple, separable, unital, $\rm{C}^*$-algebras of finite nuclear dimension satisfying the universal coefficient theorem (\cite{Kir95,Phi00,Go15, EGLN15, TWW17}).

Determining the exact value for the nuclear dimension of a C$^*$-algebra can be a challenge, as it requires finding an approximation using the minimal possible number of colours. However, there has been recent progress in the case of simple C$^*$-algebras (\cite{CETWW}), $\O_\infty$-stable $\rm{C}^*$-algebras (\cite{BGSW19}) and the Toeplitz algebra (\cite{BW19}). The latter has led to renewed interest in the nuclear dimension of extensions (\cite{Gla20}).

Given a $\rm{C}^*$-algebra $A$, it is known that the nuclear dimension of every ideal $J \lhd A$ and every quotient $A / J$ is bounded by that of $A$. In special cases, one can show that the nuclear dimension of $A$ is the maximum of $\dimnuc(J)$ and $\dimnuc(A/J)$. This happens in both the commutative setting and the approximately finite-dimensional setting, and when the extension $0\rightarrow J \rightarrow A \rightarrow A / J\rightarrow 0$ is particularly nice, for example a direct sum or a quasidiagonal extension (\cite{WZ10,RSS15}).
However, in the fully general setting, the best known upper bound is given by
\begin{equation*}
	\dimnuc(A)+1 \leq (\dimnuc(A/J)+1) + (\dimnuc(J)+1),
\end{equation*}
which comes from directly combining an $(n+1)$-coloured approximation to the ideal with an $(m+1)$-coloured approximation to the quotient (\cite[Proposition 2.9]{WZ10}).

The main result of this paper is an improved upper bound for the nuclear dimension of a large family of extensions.

\begin{theorem}\label{thm:main}
	Let $0\rightarrow J \rightarrow A \rightarrow B\rightarrow 0$ be a full extension where $J$ is a separable stable $\rm{C}^*$-algebra and $B$ is a separable, nuclear, $\O_\infty$-stable $\rm{C}^*$-algebra. Then
\begin{equation*}
	1 \leq \dimnuc(A) \leq \dimnuc(J) + 1.
\end{equation*}
In particular, if $J$ is a stable approximately finite-dimensional algebra, then $\dimnuc(A) = 1$.
\end{theorem}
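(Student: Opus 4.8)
\medskip

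\noindent\emph{Proof strategy.}
The lower bound is soft. Nuclear dimension does not increase under quotients, so $\dimnuc(A)\ge\dimnuc(B)$, and for a nonzero separable nuclear $\O_\infty$-stable C$^*$-algebra one has $\dimnuc(B)=1$ (\cite{BGSW19}); at the very least such a $B$ is properly infinite, hence not AF, so $\dimnuc(B)\ge 1$ because $\dimnuc=0$ characterises AF algebras (\cite{WZ10}). Here $B\neq 0$, which is forced by fullness once $J\neq 0$ and is harmless otherwise. The substance is therefore the bound $\dimnuc(A)\le d+1$, where I set $d:=\dimnuc(J)$ and may assume $d<\infty$. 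Stabilising (which changes neither side, since $J\otimes\K\cong J$ and nuclear dimension is stabilisation-invariant) I may assume $A$, and hence $J$, is stable. The task is: given a finite $F\subseteq A$ and $\varepsilon>0$, build a $(d+2)$-coloured completely positive contractive (c.p.c.)\ approximation of $\id_A$ agreeing with it to within $\varepsilon$ on $F$.

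The plan is to splice a two-coloured approximation witnessing $\dimnuc(B)\le 1$ to a $(d+1)$-coloured approximation of $J$, amalgamating one colour of each into a single c.p.c.\ order zero map, for a total of $2+(d+1)-1=d+2$ colours. Fix a two-coloured approximation $B\xrightarrow{\psi_B}F^B_0\oplus F^B_1\xrightarrow{\phi_B}B$ of $\id_B$ with each $\phi^{(i)}_B:=\phi_B|_{F^B_i}$ order zero. I keep the down map precomposed with $\pi\colon A\to B$, and lift each $\phi^{(i)}_B$ to a c.p.c.\ order zero map $\tilde\phi^{(i)}\colon F^B_i\to A$ with $\pi\circ\tilde\phi^{(i)}=\phi^{(i)}_B$, using that the cones $C_0((0,1])\otimes F^B_i$ are projective. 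Then $c:=\sum_i\tilde\phi^{(i)}\psi^{(i)}_B\pi\colon A\to A$ is c.p.c.\ with $\pi\circ c\approx_\varepsilon\pi$ on $F$, so $\id_A-c$ pushes $F$ to within $\varepsilon$ of $J$, and the residual error there is to be absorbed by the $(d+1)$-coloured approximation of $J$ (its down maps extended to $A$ by Arveson's theorem).

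The crux is the colour amalgamation, and it is here that both hypotheses are essential. Since $B$ is $\O_\infty$-stable, $B_\infty\cap B'$ contains $\O_\infty$ unitally; transporting this along $\pi$ and lifting (using that $\O_\infty$ is semiprojective) produces in $A_\infty\cap A'$ a copy of $\O_\infty$ whose unit $q$ satisfies $qa-a\in J_\infty$ for all $a\in A$ and whose generating isometries $\mathbf{s}_1,\mathbf{s}_2,\dots$ have mutually orthogonal range projections $\mathbf{s}_i\mathbf{s}_i^*\le q$ of proper sum. The leftover room $q-\sum_i\mathbf{s}_i\mathbf{s}_i^*$, together with stability of $J$ (so that every full hereditary subalgebra of $J$ is again a stable copy of $J$), lets me conjugate the $(d+1)$-coloured approximation of $J$ so that its image sits in a copy $J_0\cong J$ of $J$ inside $J$ whose support is asymptotically orthogonal to the range of the surviving lifted colour $\tilde\phi^{(1)}$. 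Then $(x,y)\mapsto\tilde\phi^{(1)}(x)+\phi^{(0)}_J(y)$ is asymptotically order zero, and combining this merged colour with $\tilde\phi^{(0)}$ and the remaining $J$-colours $\phi^{(1)}_J,\dots,\phi^{(d)}_J$ produces the desired $(d+2)$-coloured approximation.

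I expect the coordination in this last step to be the main obstacle: the $J$-correction must absorb the \emph{particular} residual error $\id_A-c$ produced by the lifted $B$-approximation, so the two halves cannot be chosen independently and merely added. The natural remedy is to run the whole construction inside $A_\infty$ and extract a finite-stage approximation by a reindexing argument, keeping simultaneous control of (i) the $B$-part approximating $\id_A$ modulo $J_\infty$, (ii) enough room in $J$ orthogonal to the surviving $B$-colour, and (iii) the $J$-part correcting the residual error within that room. Granting this, the final assertion is immediate: for $J$ approximately finite-dimensional $d=0$, so $\dimnuc(A)\le 1$, and with the lower bound $\dimnuc(A)=1$.
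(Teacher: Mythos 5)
Your lower bound is fine, and your high-level architecture (lift a two-coloured approximation of $B$ through $\pi$, correct the residue in the ideal, and win the improved estimate by merging one $B$-colour with one $J$-colour via orthogonality) is indeed the shape of the actual argument. But the step you yourself flag as ``the main obstacle'' is exactly where the theorem lives, and the remedy you sketch does not work. If you conjugate the $(d+1)$-coloured approximation of $J$ so that its image sits in a corner $J_0$ orthogonal to the range of the surviving colour $\tilde\phi^{(1)}$, then on the residual elements it outputs (approximately) $w(a-c(a))w^*$ rather than $a-c(a)$, so the sum of all colours no longer recovers $a$. To repair this the conjugating element $w$ would have to be approximately central, act approximately as a unit on the residuals $a-c(a)$, and have range orthogonal to the outputs of $\tilde\phi^{(1)}$; since the residual is $a-\tilde\phi^{(0)}(\cdots)-\tilde\phi^{(1)}(\cdots)$, these demands are incompatible for an arbitrary order zero lift $\tilde\phi^{(1)}$ (and if $w$ is central enough to fix the residuals, conjugation by it does not improve orthogonality at all). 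The orthogonality cannot be imposed after the fact on the ideal side; it must be built into the quotient-side maps themselves.

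This is also where your proposal silently drops the fullness hypothesis, which is essential. In the paper, fullness (together with $\dimnuc(J)<\infty$ giving the corona factorisation property, or alternatively pure infiniteness of $B$) makes the extension and its cut-downs $0\to \overline{(1-h_{i+1})J(1-h_{i+1})}\to \overline{(1-h_{i+1})A(1-h_{i+1})}\to B\to 0$ purely large in the sense of Elliott--Kucerovsky \cite{El01,KN06}. One first writes an exact decomposition $a = h_i^{1/2}ah_i^{1/2} + \theta^{(0)}_i(\pi(a)) + \theta^{(1)}_i(\pi(a))$ using a quasicentral approximate unit $(h_i)$ with $h_{i+1}h_i=h_i$ and a positive contraction $k$ supplied by $\O_\infty$-stability of $B$; here $\theta^{(1)}_i$ has image in $\overline{(1-h_{i+1})A(1-h_{i+1})}$, automatically orthogonal to the corner $\overline{h_iAh_i}$ carrying the ideal correction. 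The lifted finite-dimensional approximations of $B$ from \cite{BGSW19} are then identified with $\theta^{(0)}$ and $\theta^{(1)}$, after adding $\oplus\,0$, by Gabe's ideal-related uniqueness theorem for nuclear $\O_\infty$-stable maps \cite{Ga19}; the $X$-fullness hypotheses of that theorem are checked in the quotient and transferred to the extension algebras and their ultraproducts precisely via the purely large property. Crucially, the unitary implementing the equivalence for the $\theta^{(1)}$-colour is taken in $\M_2\bigl(\prod_{i\to\omega}\overline{(1-h_{i+1})A(1-h_{i+1})}\bigr)^\sim$, so the conjugated finite-dimensional colour retains its orthogonality to $\overline{h_iAh_i}$ and the merge is legitimate. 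Without a uniqueness/absorption input of this kind -- and hence without fullness -- nothing forces your lifts $\tilde\phi^{(i)}$ to respect any orthogonal decomposition, and your construction only yields the already known estimate $\dimnuc(A)\le \dimnuc(J)+2$ obtained by adding the two approximations as in \cite{WZ10}. (Your auxiliary step of lifting a unital-modulo-$J_\infty$ copy of $\O_\infty$ into $A_\infty\cap A'$ is also not justified as stated, but it is not where the real difficulty lies.)
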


Recall that an extension $0\rightarrow J \rightarrow A \rightarrow B\rightarrow 0$  is called \emph{full} if the Busby invariant $\beta:B \rightarrow Q(J)$ is a full $^*$-homomorphism, i.e.\ $\beta(b)$ is norm full in the corona algebra $Q(J)$ for every non-zero $b \in B$. (See Section \ref{sec:Busby} for additional definitions.)  When $B$ is simple, all unital extensions are full. So Theorem \ref{thm:main} generalises \cite[Theorem 1]{Gla20}. Another 
special case of note is when $J$ is the compact operators $\K$. In this case, $Q(J)$ is simple, so any injective $^*$-homomorphism $B \rightarrow Q(J)$ is full. 
More generally, the ideals of $Q(J)$ correspond to ideals of the multiplier algebra $M(J)$ containing $J$, and the ideal structure of $M(J)$ for AF algebras is known (\cite{El74, El87, Ro91}).

A full discussion of Ext-theory is beyond the scope of this paper. However, we note that whenever $J$ is stable and $B$ is separable there will be a full trivial extension, so every class in $\mathrm{Ext}(B, J)$ will contain full representatives (see Section \ref{sec:Busby}). This gives many examples where Theorem \ref{thm:main} applies. 

The proof of Theorem \ref{thm:main} builds on the strategy developed in \cite{Gla20}, based on approximations of the form
\begin{align}\label{eqn:intro-approx}
	a &\approx [(1-h_{i+1})\mu(k_i\pi(a)) + (h_i-h_{i+1}) \mu(\pi(a))] \tag{$\dagger$} \\
	  &\quad \quad + (1-h_{i+1})\mu((1-k_i)\pi(a))  \nonumber \\
	  &\quad \quad + h_i a, \nonumber 
\end{align} 
where $\pi:A \rightarrow B$ is the quotient map, $\mu:B \rightarrow A$ is a c.p.c.\ splitting, $(k_i)_i$ is a approximately central sequence in $B$, and $(h_i)_i$ is a quasicentral approximate unit with $h_{i+1}h_i = h_i$.  The final term is in the ideal and the rest factor through the quotient. Ultimately, it is the approximate orthogonality between the final two terms of \eqref{eqn:intro-approx} that allows for the improved dimension estimate.

Constructing the finite dimensional approximations to $A$ is substantially harder in the generality of Theorem \ref{thm:main} than in the special case considered in \cite{Gla20}. Since $B$ is not assumed to be simple, we must use the ideal-related version of Gabe's uniqueness theorem for nuclear, $\O_\infty$-stable maps (\cite{Ga19}). Gabe's uniqueness theorem is used to relate the approximations in \eqref{eqn:intro-approx} to a system of finite-dimensional approximations constructed by Bosa, Gabe, Sims and White in their proof that separable, nuclear, $\O_\infty$-stable $\rm{C}^*$-algebras have nuclear dimension one (\cite{BGSW19}). At a more technical level, we make essential use of Elliott and Kucerovsky's notion of \emph{purely large} extensions (\cite{El01}) to prove some structural results about ideals in the extension $\rm{C}^*$-algebra and its ultrapower. The extension in Theorem \ref{thm:main} will be purely large whenever $\dimnuc(J) < \infty$ by results of Robert (\cite{Ro11}) and Kucerovsky--Ng (\cite{KN06}).

\subsection*{Structure of paper}
In Section \ref{sec:prelims}, we fix some global notational conventions and recap some preliminary material for the benefit of the reader. In Section \ref{sec:purely-large}, we review purely large extensions and prove some technical lemmas needed in the proof of the main theorem. In Section \ref{sec:uniqueness}, we review actions of a topological space on a $\rm{C}^*$-algebra and isolate the uniqueness result used in the proof of the main theorem. In Section \ref{sec:nuclear-dimension}, we prove Theorem \ref{thm:main}. 

\subsection*{Acknowledgements} This paper would not have been possible without the Glasgow Summer Project 2019 on the Cuntz--Teoplitz algebras. I would like to thank everyone involved. I would also like to thank Christian B{\"o}nicke for discussions that informed this paper, Jorge Castillejos and Stuart White for their comments on an earlier version of this paper, Jamie Gabe for suggesting Proposition \ref{prop:PureInf-PureLarge}, and Antoine le Calvez for his talk on the Elliott--Kucerovsky theorem, where I learnt about purely large extensions. 

\section{preliminaries}\label{sec:prelims}

\subsection{Notation}

We write $\M_n$ for the $\rm{C}^*$-algebra of $n \times n$ matrices, $\K$ for the $\rm{C}^*$-algebra of compact operators, and $\O_n$ for the Cuntz algebra with $n$ generators (including $n = \infty$). We write $A_+$ for the positive cone of a $\rm{C}^*$-algebra $A$, and we write $a \approx_\epsilon b$ as a shorthand for $\|a - b\| \leq \epsilon$. We use the abbreviations c.p.\ maps and c.p.c.\ maps for completely positive maps and completely positive contractive maps, respectively. 

We write $A^\sim$ for the unitisation of the $\rm{C}^*$-algebra $A$. (For avoidance of ambiguity, we fix our convention to be that $A^\sim \cong A \oplus \C$ when $A$ is already unital.) We will often work in the unitisation for ease of notation and during calculations. For example, we shall consider the hereditary subalgebras of a potentially non-unital $\rm{C}^*$-algebra $A$ of the form $\overline{(1-a)A(1-a)}$ for $a \in A_+$. We write $M(A)$ for the multiplier algebra of $A$ and $Q(A)$ for the corona algebra $M(A)/A$.

We fix a free ultrafilter $\omega$ on $\N$ that will be used throughout the paper. Given a sequence of $\rm{C}^*$-algebras $(A_i)_i$, we write $\prod_{i \in \N} A_i$ for the \emph{product} $\rm{C}^*$-algebra, i.e.\ the $\rm{C}^*$-algebra of norm bounded sequences $(a_i)_i$ with $a_i \in A_i$. We then define the \emph{ultraproduct} to be the quotient
\begin{equation}
	\prod_{i \to \omega} A_i = \frac{\prod_{i \in \N} A_i}{\lbrace (a_i)_i: \lim_{i \to \omega}\|a_i\| = 0 \rbrace}.
\end{equation} 
The \emph{ultrapower} of a $\rm{C}^*$-algebra $A$ is the ultraproduct of the constant sequence with $A_i = A$ for all $i \in \N$. We write $A_\omega$ for the ultrapower of $A$ and identify $A$ with the subalgebra of $A_\omega$ coming from constant sequences. We write $A_\omega \cap A'$ for the commutant of $A$ in $A_\omega$. The ideal of $A_\omega$ given by $\mathrm{Ann}(A) = \{x \in A_\omega: xa = 0 = ax \mbox{ for all } a \in A\}$  is called the annihilator of $A$ in $A_\omega$.   

Given a map $\phi:A\rightarrow B$, we shall write $\phi \oplus 0$ to denote the map $A\rightarrow \M_2(B)$ given by
\begin{equation}
	a \mapsto 
\begin{pmatrix}
\phi(a) & 0\\
0  & 0 
\end{pmatrix}.
\end{equation}

\subsection{Order zero maps and nuclear dimension}

Let $A$ and $B$ be $\rm{C}^*$-algebras. A c.p.\ map $\phi:A \rightarrow B$ is \emph{order zero} if it preserves orthogonality, i.e.\ $\phi(a)\phi(b) = 0$ whenever $a,b \in A_+$ satisfy $ab = 0$. There is a one-to-one correspondence between c.p.c.\ order zero maps $\phi:A \rightarrow B$ and $^*$-homomorphisms from the cone $\Phi:C_0(0,1] \otimes A \rightarrow B$ where $\phi(a) = \Phi(\mathrm{id}_{(0,1]} \otimes a)$ for all $a \in A$; see \cite[Corollary 3.1]{WZ09}. When $F$ is a finite dimensional $\rm{C}^*$-algebra, the cone $C_0(0,1] \otimes F$ is projective by \cite[Theorem 4.9]{Lo93}. This leads to following lifting result for c.p.c.\ order zero maps with finite dimensional domains.

\begin{proposition}[{\cite[Proposition 1.2.4]{Wi09}}]\label{prop:oz-lifting}
Let $\pi:A \rightarrow B$ be a surjective $^*$-homomorphism of $\rm{C}^*$-algebras. Suppose $\phi:F \rightarrow B$ is a c.p.c.\ order zero map with $F$ a finite dimensional $\rm{C}^*$-algebra. Then there exists a c.p.c.\ order zero map $\hat{\phi}:F \rightarrow A$ such that $\phi = \pi \circ \hat{\phi}$.
\end{proposition}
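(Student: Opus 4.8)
The plan is to reduce the lifting problem for the order zero map $\phi$ to a lifting problem for a $^*$-homomorphism, where projectivity of the cone applies directly. First I would invoke the correspondence recalled above (\cite[Corollary 3.1]{WZ09}): since $\phi:F\rightarrow B$ is a c.p.c.\ order zero map, there is a (unique) $^*$-homomorphism $\Phi:C_0(0,1]\otimes F\rightarrow B$ with $\phi(a)=\Phi(\mathrm{id}_{(0,1]}\otimes a)$ for every $a\in F$. Thus it suffices to lift $\Phi$ through $\pi$ and then translate back.

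Since $F$ is finite dimensional, the cone $C_0(0,1]\otimes F$ is projective as a $\mathrm{C}^*$-algebra by \cite[Theorem 4.9]{Lo93}. Applying projectivity to the surjective $^*$-homomorphism $\pi:A\rightarrow B$ and the $^*$-homomorphism $\Phi:C_0(0,1]\otimes F\rightarrow B$ produces a $^*$-homomorphism $\hat{\Phi}:C_0(0,1]\otimes F\rightarrow A$ with $\pi\circ\hat{\Phi}=\Phi$. Now define $\hat{\phi}:F\rightarrow A$ by $\hat{\phi}(a):=\hat{\Phi}(\mathrm{id}_{(0,1]}\otimes a)$. The map $a\mapsto\mathrm{id}_{(0,1]}\otimes a$ is c.p.c.\ and sends orthogonal positive elements to orthogonal positive elements, so its composition with the $^*$-homomorphism $\hat{\Phi}$ is again a c.p.c.\ order zero map; hence $\hat{\phi}$ is c.p.c.\ order zero. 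Finally, for every $a\in F$,
\[
\pi(\hat{\phi}(a))=\pi\bigl(\hat{\Phi}(\mathrm{id}_{(0,1]}\otimes a)\bigr)=\Phi(\mathrm{id}_{(0,1]}\otimes a)=\phi(a),
\]
so $\pi\circ\hat{\phi}=\phi$, which completes the argument.

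In this proof there is no genuine obstacle once the two cited facts are in hand: the whole argument is the observation that the order zero--cone dictionary converts the statement into the projectivity of $C_0(0,1]\otimes F$. The real work is outsourced to \cite[Theorem 4.9]{Lo93}; a self-contained proof would instead have to lift the defining relations of $C_0(0,1]\otimes\M_n$ --- a positive contraction intertwined with a system of matrix units --- through $\pi$, lifting the matrix units first and then correcting an approximate lift of the positive contraction by functional calculus, before assembling general finite-dimensional $F$ as a direct sum. For the purposes of this paper, however, Proposition \ref{prop:oz-lifting} follows immediately from the cited results.
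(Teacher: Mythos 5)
Your argument is correct and is exactly the route the paper has in mind: the paper itself deduces the proposition from the order zero--cone correspondence of \cite[Corollary 3.1]{WZ09} together with the projectivity of $C_0(0,1]\otimes F$ from \cite[Theorem 4.9]{Lo93}, citing \cite[Proposition 1.2.4]{Wi09} rather than writing out the short translation you give. Nothing further is needed.
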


We now recall the definition of nuclear dimension of a $\rm{C}^*$-algebra and more generally the nuclear dimension of a $^*$-homomorphism.
\begin{definition}[{c.f.\ \cite[Definition 2.1]{WZ10}}]\label{def:DimNuc}
A $^*$-homomorphism of $\rm{C}^*$-algebras $\pi:A \to B$ is said to have \emph{nuclear dimension} at most $n$, denoted $\dim_{\mathrm{nuc}}(\pi) \leq n$, if for every finite subset $\mathcal{F} \subseteq A$ and every $\epsilon > 0$, there exist $n+1$ finite-dimensional $\rm{C}^*$-algebras $G^{(0)}, \ldots, G^{(n)}$ together with functions $\psi: A \rightarrow G^{(0)} \oplus \ldots \oplus G^{(n)}$ and $\phi:G^{(0)} \oplus \ldots \oplus G^{(n)} \rightarrow B$ such that $\psi$ is a c.p.c.\ map, each restriction $\phi|_{G^{(j)}}$ is a c.p.c.\ order zero map and, for every $a \in \mathcal{F}$,
\begin{equation} \|\pi(a) - \phi(\psi(a))\| < \epsilon. \end{equation} 

The \emph{nuclear dimension} of a $\rm{C}^*$-algebra $A$, denoted $\dim_{\mathrm{nuc}}(A)$, is then defined as the nuclear dimension of the identity homomorphism $\id_A$.
\end{definition}

\subsection{Ideals and hereditary subalgebras}\label{sec:ideals}

In this paper, \emph{ideal} will always mean a two-sided closed ideal. We write $\I(A)$ for the partially ordered set of ideals of a $\rm{C}^*$-algebra $A$. Given a $^*$-homomorphism $f:A \rightarrow B$, we define $\I(f): \I(A) \rightarrow \I(B)$ by $I \mapsto \overline{Bf(I)B}$. Every ideal in a hereditary subalgebra $B \subseteq A$ is of the form $I \cap B$ for some ideal $I \lhd A$; see for example \cite[Theorem 3.2.7]{Mu90}. We record a useful corollary of this fact. 

\begin{proposition}\label{prop:hereditary}
	Let $B$ be a hereditary subalgebra of the  $\rm{C}^*$-algebra $A$. Let $S \subseteq B$. Suppose $I \lhd A$ is the ideal in $A$ generated by $S$. Then $I \cap B$ is the ideal in $B$ generated by $S$.  
\end{proposition}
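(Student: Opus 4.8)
The plan is to pass through the general structure theorem for ideals in hereditary subalgebras. Let $J \lhd B$ denote the ideal in $B$ generated by $S$; we must show $J = I \cap B$, where $I \lhd A$ is the ideal in $A$ generated by $S$. One inclusion is immediate: since $S \subseteq I$ and $S \subseteq B$, we have $S \subseteq I \cap B$, and $I \cap B$ is an ideal of $B$, so $J \subseteq I \cap B$.

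For the reverse inclusion, the idea is to apply the quoted fact (\cite[Theorem 3.2.7]{Mu90}) that every ideal of the hereditary subalgebra $B$ has the form $K \cap B$ for some ideal $K \lhd A$. Apply this to the ideal $J \lhd B$: there is an ideal $K \lhd A$ with $J = K \cap B$. Then $S \subseteq J \subseteq K$, and since $K$ is an ideal of $A$ containing $S$ while $I$ is the smallest such ideal, we get $I \subseteq K$. Hence $I \cap B \subseteq K \cap B = J$, which gives the missing inclusion.

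Combining the two inclusions yields $I \cap B = J$, i.e.\ $I \cap B$ is the ideal of $B$ generated by $S$.

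The only real content is the cited structure theorem for ideals in hereditary subalgebras; once that is invoked, the argument is a routine minimality comparison, so I do not anticipate any genuine obstacle. The one point to be slightly careful about is that the theorem is stated as ``every ideal of $B$ is of the form $I \cap B$''\,---\,one should make sure it applies to the specific ideal $J$ and that the ``smallest ideal containing $S$'' characterisation of $I$ is used in the correct direction, but both are straightforward.
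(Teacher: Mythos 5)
Your proposal is correct and follows essentially the same route as the paper: both arguments rest on the fact that every ideal of the hereditary subalgebra $B$ has the form $K \cap B$ for some $K \lhd A$, combined with minimality of $I$ among ideals of $A$ containing $S$. The only cosmetic difference is that the paper applies this to an arbitrary ideal $J \lhd B$ containing $S$ (showing $I \cap B$ is the smallest such), whereas you apply it directly to the ideal generated by $S$; the content is identical.
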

\begin{proof}
	Since $I \lhd A$, we have $I \cap B \lhd B$. By hypothesis, $S \subseteq  I \cap B$. Suppose that $J \lhd B$ and $S \subseteq J$. As $B$ is a hereditary subalgebra of $A$, there is $K \lhd A$ such that $J = K \cap B$. Then $S \subseteq K$. Hence, $I \subseteq K$ because $I$ is generated by $S$. Therefore, $I \cap B \subseteq J$.
\end{proof}

\subsection{\texorpdfstring{Purely infinite $\rm{C}^*$-algebras}{Purely infinite C*-algebras}}\label{sec:purely-infinite}

Let $A$ be a $\rm{C}^*$-algebra and let $a,b \in A_+$. We recall that $a$ is said to be \emph{Cuntz subequivalent} to $b$, denoted by $a \precsim b$, if there exists a sequence $(r_i)_i$ in $A$ such that $a = \lim_{i\to\infty}r_i^* b r_i$. Furthermore, we recall that a \emph{character} on a $\rm{C}^*$-algebra is a non-zero $^*$-homomorphism $A \rightarrow  \C$.

Following \cite{KR00}, a $\rm{C}^*$-algebra $A$ is said to be \emph{purely infinite} if there are no characters on $A$ and for any $a,b \in A_+$, $a \precsim b$ whenever $a$ is in the ideal generated by $b$. Every $\O_\infty$-stable $\rm{C}^*$-algebra is purely infinite by \cite[Proposition 4.5]{KR00}. Moreover, purely infiniteness passes to ideals, quotients, products and ultraproducts; see \cite[Proposition 4.5 and Proposition 4.20]{KR00}.

\subsection{Extensions and the Busby invariant}\label{sec:Busby}

An \emph{extension} of $\rm{C}^*$-algebras is a short exact sequence $0 \rightarrow J \xrightarrow{\iota} A \xrightarrow{\pi} B \rightarrow 0$. We will identify $J$ with the ideal $\iota(J) \lhd A$ and $B$ with the quotient $A / \iota(J)$. 

We now briefly recall the definition of the \emph{Busby invariant} of an extension $0 \rightarrow J \rightarrow A \rightarrow B \rightarrow 0$; for a more expansive account, we refer the reader to \cite[Section 15.2]{Bla86}.  As $J \lhd A$, every element $a \in A$ defines an element of the multiplier algebra  $M(J)$ via multiplication. The resulting $^*$-homomorphism $A \rightarrow M(J)$ extends the canonical embedding of $J$ into $M(J)$, so there is an induced $^*$-homomorphism from $B = A / J$ to $Q(J) = M(J)/J$. This $^*$-homomorphism $\beta:B \rightarrow Q(J)$ is the Busby invariant of the extension. 

An extension can be completely recovered from its Busby invariant via a pull back construction. Indeed, writing $q:M(J) \rightarrow Q(J)$ for the quotient map, we have $A \cong \{(x,b) \in M(J) \oplus B: q(x) = \beta(b) \}$; see \cite[Example 15.3.2]{Bla86}. In particular, we have that $A$ is unital if and only if $\beta$ is unital, and $J$ is an essential ideal in $A$ if and only if $\beta$ is injective. 

We now recall the definition of a (norm) full extension from \cite[Definition 1(iii)]{KN06}. 

\begin{definition}
	An extension $0\rightarrow J \rightarrow A \rightarrow B\rightarrow 0$ is \emph{full} if its Busby invariant $\beta:B \rightarrow Q(J)$ is a full $^*$-homomorphism, i.e.\ whenever $\beta(b)$ is norm full in $Q(J)$ for all non-zero $b \in B$.
\end{definition}

When $B$ is simple, it suffices to check that a single non-zero element has full image under $\beta$. In particular, unital extensions will be full whenever $B$ is simple.
When $Q(J)$ is simple, we see that all essential extensions are full.
This occurs when $J = \K$ and when $J = C \otimes \K$ where $C$ is unital, simple and purely infinite by \cite[Theorem 4.2]{Ro91}.

Suppose $B$ is separable and $J = J_0 \otimes \K$ is stable. Fix a faithful representation $\rho:B \rightarrow B(\ell^2)$. Replacing $\rho$ with its infinite direct sum, we may assume that $\rho$ is full. Then $\tau:B \rightarrow M(J_0) \otimes_{\mathrm{alg}}  B(\ell^2) \subseteq M(J)$ defined by $b \mapsto 1 \otimes \rho(b)$ is full. Composing with the quotient map $M(J) \rightarrow Q(J)$, we get a full extension $\bar{\tau}:B \rightarrow Q(J)$. Moreover, if $\beta:B \rightarrow Q(J)$ is any other extension, then $\beta \oplus \bar{\tau}:B \rightarrow \M_2(Q(J)) \cong Q(J)$ is a full extension. 

In Ext-theory, the extension $\bar{\tau}$ is said to be \emph{trivial}, as it has a lift $\tau$. The extensions $\beta \oplus \bar{\tau}$ and $\beta$ are said to be \emph{stably equivalent} and define the same element of the group $\mathrm{Ext}(B,J)$; see \cite[Definition 16.3]{Bla86}. 
Therefore, every element of $\mathrm{Ext}(B,J)$ has a full representative.

\section{Purely large extensions}\label{sec:purely-large}

The notion of a purely large extension was introduced by Elliott and Kucerovsky in \cite{El01}, motivated by their work on absorbing representations and Ext theory. We recall their definition here.

\begin{definition}{(\cite[Definition 1]{El01})}
	An extension $0\rightarrow J \rightarrow A \rightarrow B\rightarrow 0$ is \emph{purely large} if for any $a \in A \setminus J$ the hereditary subalgebra $\overline{aJa^*}$ contains a stable, $\sigma$-unital subalgebra $D$ that is full in $J$.
\end{definition}

The following elementary result ensures that certain cut-downs of purely large extensions remain purely large.
\begin{proposition}\label{prop:cutdown}
	Suppose	$0\rightarrow J \rightarrow A \rightarrow B\rightarrow 0$ is a purely large extension. For any $h \in J_+$, the induced extension 
\begin{equation}\label{eqn:cutdown}
0\rightarrow \overline{(1-h)J(1-h)} \rightarrow \overline{(1-h)A(1-h)} \rightarrow B\rightarrow 0
\end{equation}
is purely large.
\end{proposition}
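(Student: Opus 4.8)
The plan is to verify the purely large condition directly for the extension \eqref{eqn:cutdown}, reducing it to the corresponding condition for the original extension. First I would identify the ingredients: write $J' = \overline{(1-h)J(1-h)}$ and $A' = \overline{(1-h)A(1-h)}$ (these cutdowns taking place in the unitisation $A^\sim$, as flagged in the notation section), and note that the quotient $A'/J'$ is indeed $B$ — this needs a brief check, but follows since $(1-h)$ maps to $1$ in $B^\sim$, so cutting down by it does nothing modulo the ideal. Let $a \in A' \setminus J'$; I must produce a stable, $\sigma$-unital subalgebra $D \subseteq \overline{aJ'a^*}$ that is full in $J'$.

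The key observation is that $a \in A' \setminus J'$ forces $a \in A \setminus J$: if $a$ were in $J$, then since $a = (1-h)a(1-h)$ (up to the closure/approximation inherent in working in the hereditary subalgebra) it would lie in $J' = \overline{(1-h)J(1-h)}$. So by pure largeness of the original extension, $\overline{aJa^*}$ contains a stable, $\sigma$-unital $D$ that is full in $J$. The issue is that I need a subalgebra of $\overline{aJ'a^*}$, not $\overline{aJa^*}$, and fullness in $J'$ rather than in $J$. For the first point, observe $aJ'a^* = a\,\overline{(1-h)J(1-h)}\,a^*$, and since $a = a(1-h)$ I have $a(1-h)x(1-h)a^* = ax a^*$ for $x \in J$, so in fact $\overline{aJ'a^*} = \overline{aJa^*}$ — the cutdown by $(1-h)$ is absorbed by $a$ on both sides. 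Thus $D \subseteq \overline{aJ'a^*}$ automatically.

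For fullness, I need: the ideal of $J'$ generated by $D$ is all of $J'$. I know the ideal of $J$ generated by $D$ is all of $J$. Here I would invoke Proposition \ref{prop:hereditary}: $J' = \overline{(1-h)J(1-h)}$ is a hereditary subalgebra of $J$, and $D \subseteq J'$, so the ideal of $J'$ generated by $D$ equals $I \cap J'$ where $I$ is the ideal of $J$ generated by $D$; but $I = J$, so this is $J \cap J' = J'$. Hence $D$ is full in $J'$. Since $D$ remains stable and $\sigma$-unital (these properties are intrinsic to $D$), this completes the verification.

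The main obstacle I anticipate is bookkeeping the cutdowns cleanly — in particular making rigorous the identities $\overline{aJ'a^*} = \overline{aJa^*}$ and $a = a(1-h)$ for $a$ in the closed hereditary subalgebra $A' = \overline{(1-h)A(1-h)}$, where elements are only approximately of the form $(1-h)a'(1-h)$. This is handled by a routine approximation argument: it suffices to prove the statements for a dense set of $a$, namely those genuinely of the form $(1-h)a'(1-h)$, and then pass to the limit using that the purely large property for a given $a$ is inherited by anything with the same generated hereditary subalgebra. A secondary, even more minor point is confirming that $A'/J' \cong B$; this is immediate from the pull-back description of the extension, or directly from the fact that $h \in J$ so $1-h$ is a unit modulo $J$.
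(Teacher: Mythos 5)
Your overall skeleton---deduce $a \in A\setminus J$, invoke pure largeness of the original extension, and upgrade fullness from $J$ to $J' = \overline{(1-h)J(1-h)}$ via Proposition \ref{prop:hereditary}---matches the paper's proof. The gap is in the step where you ``absorb'' the cutdown: the identity $a = a(1-h)$ is simply false in general, because $h$ is only a positive element of $J$, not a projection; even for elements exactly of the form $a = (1-h)b(1-h)$ one has $a(1-h) = (1-h)b(1-h)^2 \neq a$. So your justification of $\overline{aJ'a^*} = \overline{aJa^*}$, and likewise of the implication $a \in \overline{(1-h)A(1-h)} \cap J \Rightarrow a \in J'$, does not stand as written. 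Moreover, the proposed repair---verify the identity on the dense set $(1-h)A(1-h)$ and ``pass to the limit''---does not close the gap: the identity already fails on that dense set, and the purely large condition concerns the hereditary subalgebra $\overline{aJ'a^*}$, which is not preserved under small perturbations of $a$, so nearby elements do not ``have the same generated hereditary subalgebra.''

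The facts you need are nevertheless true, but require an actual argument. Since $a \in \overline{(1-h)A(1-h)}$, one has $a\,g_\lambda(1-h) \to a$ for continuous functions $g_\lambda$ with $g_\lambda(0)=0$ increasing pointwise to $1$ on $(0,1]$ (e.g.\ $g_\lambda(t)=\min(1,\lambda t)$); writing $g_\lambda(t) = t\,m_\lambda(t)$ with $m_\lambda$ bounded and continuous gives, for $x \in J$,
\begin{equation}
a x a^* = \lim_\lambda a(1-h)\bigl(m_\lambda(1-h)\,x\,m_\lambda(1-h)\bigr)(1-h)a^* \in \overline{a(1-h)J(1-h)a^*},
\end{equation}
whence $\overline{aJa^*} = \overline{aJ'a^*}$; the same device shows $\overline{(1-h)A(1-h)} \cap J = J'$, so indeed $a \in A \setminus J$. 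The paper sidesteps all of this by applying the purely large hypothesis not to $a$ but to the element $a(1-h) \in A\setminus J$: the hereditary subalgebra appearing in the definition is then $\overline{a(1-h)J(1-h)a^*} = \overline{aJ'a^*}$ on the nose, after which the fullness upgrade via Proposition \ref{prop:hereditary} proceeds exactly as you describe. Either repair makes your argument correct; as written, the key identity is unsupported.
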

\begin{proof}
Let $a \in \overline{(1-h)A(1-h)} \setminus \overline{(1-h)J(1-h)}$. Then $a(1-h) \in A \setminus J$. Since $0\rightarrow J \rightarrow A \rightarrow B\rightarrow 0$ is a purely large extension, the hereditary subalgebra $\overline{a(1-h)J(1-h)a^*}$ contains a stable, $\sigma$-unital subalgebra $D$ that is full in $J$. 
Since $D \subseteq \overline{(1-h)J(1-h)}$ and $\overline{(1-h)J(1-h)}$ is a hereditary subalgebra of $J$, $D$ is full in $\overline{(1-h)J(1-h)}$ by Proposition \ref{prop:hereditary}. Moreover, $\overline{a(1-h)J(1-h)a^*} = \overline{a\overline{(1-h)J(1-h)}a^*}$. Therefore, the extension \eqref{eqn:cutdown} is purely large.
\end{proof}

Next, we recall an approximation property of purely large extensions, established by Elliott and Kucerovsky, that will play a crucial role in this paper.   
\begin{proposition}[{\cite[Lemma 7]{El01}}]\label{prop:EK}
	Suppose	$0\rightarrow J \rightarrow A \rightarrow B\rightarrow 0$ is a purely large extension. Let $a \in A \setminus J$. Then for any $x \in J$ and $\epsilon > 0$, there exists $x_0 \in J$ such that $\|x - x_0ax_0^*\| \leq \epsilon$. 
Moreover, if $\|a + J \| = 1$ and $\|x\| = 1$, then $x_0$ can be chosen with $\|x_0\| = 1$.
\end{proposition}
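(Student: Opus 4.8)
The plan is to deduce the statement from Cuntz--comparison theory, using the subalgebra provided by pure largeness. First I apply the definition of a purely large extension directly to $a$: there is a stable, $\sigma$-unital subalgebra $D\subseteq\overline{aJa^*}$ that is full in $J$, and since $\overline{aJa^*}$ is a hereditary subalgebra of $J$, Proposition~\ref{prop:hereditary} upgrades this to $D$ being full in $\overline{aJa^*}$ as well. I would also record the elementary identity $\overline{aJa^*}=\overline{aa^*Jaa^*}$, which follows from $a=\lim_n(aa^*)^{1/n}a$, together with the fact that $a\in J$ iff $aa^*\in J$; replacing $a$ by $(aa^*)^{1/2}$ changes neither $\overline{aJa^*}$ nor the membership $a\notin J$, so we may assume $a\in A_+$, and it then suffices to treat $x\in J_+$.

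The core of the argument is a chain of Cuntz comparisons. Fix a strictly positive $d\in D_+$; it is full in $J$ because $D$ is, and because $D$ is stable, $n$ copies of $d$ are Cuntz subequivalent to $d$ for every $n$. For $x\in J_+$, fullness of $d$ places $x$ in the ideal generated by $d$, hence $x\precsim d\oplus\cdots\oplus d$ in $\M_n(J)$ for some $n$, and composing yields $x\precsim d$ in $J$. Turning this into an honest norm estimate, for any $\epsilon>0$ there is $r\in J$ with $\|x-r^{*}dr\|<\epsilon/2$. Thus everything reduces to the following claim: for every $\delta>0$ there is $w\in J$ with $\|d-w^{*}aw\|<\delta$. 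Granting this, set $x_0:=r^{*}w^{*}$; then $x_0\in J$ and $x_0ax_0^{*}=r^{*}(w^{*}aw)r$ lies within $\|r\|^{2}\delta$ of $r^{*}dr$, hence within $\epsilon$ of $x$ once $\delta$ is chosen small.

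To prove the remaining claim, I use $d\in\overline{aJa^*}=\overline{aa^*Jaa^*}$ (recall $a\ge0$, so $aa^*=a^2$) and the factorisation $(a-\eta)_{+}=k_{\eta}(a)^{1/2}\,a\,k_{\eta}(a)^{1/2}$ with $0\le k_{\eta}\le1$ and $k_\eta$ vanishing near $0$. Since $(a-\eta)_{+}\to a$, the union of the hereditary subalgebras $\overline{(a-\eta)_{+}J(a-\eta)_{+}}$ is dense in $\overline{aJa^*}$, so after a small perturbation $d$ is approximated by a finite sum $\sum_i b_i(a-\eta)_{+}c_i$ with $b_i,c_i\in J$; substituting the factorisation and absorbing $k_\eta(a)^{1/2}$ into the $b_i,c_i$ (which stay in $J$), this becomes $\sum_i\beta_i a\gamma_i$ with $\beta_i,\gamma_i\in J$. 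The key move is then to re-assemble this finite sum as a single conjugate: using the stability of $D$, one replaces $d$ by an orthogonal sum of copies of $d$ inside $D$ and exploits isometries $v_i\in M(D)$ with $v_i^{*}v_j=\delta_{ij}$ and $\sum_i v_iv_i^{*}=1$, coming from $D\cong D\otimes\K$, to fold $\sum_i\beta_i a\gamma_i$ into $w^{*}aw$ up to arbitrarily small error, with $w\in J$. I expect this folding step --- in particular, ensuring that $w$ can be taken in the ideal $J$ rather than merely in $A$ --- to be the main obstacle; it is exactly here that stability (to manufacture the orthogonal copies) and the one-sided factorisations of the cut-downs of $a$ are both needed, and threading together the $\eta$- and $\delta$-level approximations requires some care.

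Finally, for the closing assertion of the Proposition I would run the same construction starting from norm-one representatives: replace $x$ by $(x-\eta)_{+}$ and use functional calculus to renormalise so that the resulting $x_0$ has $\|x_0\|=1$ while still $\|x-x_0ax_0^{*}\|\le\epsilon$, the hypothesis $\|a+J\|=1$ guaranteeing that no norm is lost in passing to the quotient when the "unit vector" $x_0$ is concentrated on the part of $a$ realising this quotient norm.
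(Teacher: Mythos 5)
You never see the paper's argument here because there isn't one: the paper simply cites \cite[Lemma 7]{El01}, so your proposal has to stand on its own, and it has two genuine gaps. First, the reduction to $a\geq 0$ is not a reduction. The conclusion $\|x-x_0ax_0^*\|\leq\epsilon$ contains $a$ itself, linearly; proving the estimate with $(aa^*)^{1/2}$ in place of $a$ produces approximants $x_0(aa^*)^{1/2}x_0^*$, and there is no mechanism for converting these into expressions of the form $x_0ax_0^*$ (the hypotheses are unchanged under $a\mapsto(aa^*)^{1/2}$, but the conclusion is not). Moreover, once $a\geq 0$ every element $x_0ax_0^*$ is positive, so an $x\in J$ with a nonzero negative part cannot be approximated at all; your aside that ``it then suffices to treat $x\in J_+$'' therefore conceals either a false statement or an unproved reduction. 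The positive case is indeed the essential one (it is all the paper uses, and it is what Elliott--Kucerovsky prove with norm control), but handling general $a$, which the statement demands, is precisely where the real work of \cite{El01} lies and your outline never reaches it. Second, the crux claim $\|d-w^*aw\|<\delta$ with $w\in J$ is exactly where you stop, and the proposed repair cannot work as described: if $w=\sum_i(\cdot)v_i^*$ with isometries $v_i\in M(D)$ of orthogonal ranges, then $w^*aw$ contains cross terms $v_i(\cdot)\,a\,(\cdot)v_j^*$ for $i\neq j$ in which the two isometries are separated by $a$, so the orthogonality $v_i^*v_j=\delta_{ij}$ is never brought to bear and nothing makes these terms small. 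Stability of $D$ is not the tool for this step.

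In fact, in the case $a\geq 0$ the crux needs no folding and no stability at all: $\overline{aJa}$ is a hereditary $\rm{C}^*$-subalgebra, so $d^{1/2}\in\overline{aJa}$, and $aJa$ is already a linear subspace, so $d^{1/2}\approx_\delta aca$ for a \emph{single} $c\in J$; then $d=d^{1/2}\bigl(d^{1/2}\bigr)^*\approx (aca)(aca)^*=\bigl(aca^{1/2}\bigr)\,a\,\bigl(aca^{1/2}\bigr)^*$, so $w^*=aca^{1/2}\in J$ does the job. Combined with your comparison step $x\approx rdr^*$ --- which is correct in substance (modulo the standard replacement of $x$ by $(x-\epsilon)_+$ when passing from ``$x$ lies in the ideal generated by $d$'' to $x\precsim d^{\oplus n}$) and is the place where fullness and stability of $D$ are genuinely used --- this settles the case $a\geq 0$, $x\geq 0$. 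What remains unproved in your proposal is the statement for general $a\in A\setminus J$ and the ``moreover'' clause: the suggested renormalisation of $x_0$ is not an argument, since the point of the hypothesis $\|a+J\|=1$ is to give quantitative control of $\|x_0\|$ through the construction itself. For both of these one has to follow \cite[Lemma 7]{El01}.
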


Proposition \ref{prop:EK} implies that every ideal of $A$ either contains $J$ or is contained in $J$. The norm bounds in Proposition \ref{prop:EK} ensure that a similar result is true after passing to ultraproducts. This is the subject of the following proposition, which generalises \cite[Proposition 4(i)]{Gla20}. 

\begin{proposition}\label{lem:6AM}
Let	$(0\rightarrow J_i \rightarrow A_i \xrightarrow{\pi_i} B_i \rightarrow 0)_i$ be a sequence of purely large extensions where $B_i$ are $\O_\infty$-stable $\rm{C}^*$-algebras.
Let 
\begin{equation}\label{eqn:ultraproduct-extension}
   0 \rightarrow \prod_{i\to\omega}J_i \rightarrow \prod_{i\to\omega}A_i \xrightarrow{\pi_\omega} \prod_{i\to\omega}B_i \rightarrow 0
\end{equation}
be the induced extension of the corresponding ultrapowers. Let $x,y \in A_\omega$. Then $x$ is in the ideal generated by $y$ whenever $\pi_\omega(x)$ is in the ideal generated by $\pi_\omega(y)$.
\end{proposition}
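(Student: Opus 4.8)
The plan is to reduce the statement to two ingredients: the pure infiniteness of $\prod_{i\to\omega} B_i$ (which lets us control the quotient side via Cuntz subequivalence), and the purely large hypothesis (which lets us transport Cuntz subequivalence back across $\pi_\omega$). Write $J_\omega = \prod_{i\to\omega} J_i$, $A_\omega = \prod_{i\to\omega} A_i$, $B_\omega = \prod_{i\to\omega} B_i$. After replacing $x$ by $x^*x$ and $y$ by $y^*y$, we may assume $x, y \in (A_\omega)_+$; being in the ideal generated by $y$ is unchanged, since the ideal generated by $y$ equals the ideal generated by $y^*y$. So the goal becomes: if $\pi_\omega(x)$ lies in the ideal of $B_\omega$ generated by $\pi_\omega(y)$, then $x$ lies in the ideal of $A_\omega$ generated by $y$.

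First I would handle the quotient side. Since each $B_i$ is $\O_\infty$-stable, it is purely infinite by \cite[Proposition 4.5]{KR00}, and pure infiniteness passes to ultraproducts by \cite[Proposition 4.20]{KR00}, so $B_\omega$ is purely infinite. Hence $\pi_\omega(x) \in \overline{B_\omega \pi_\omega(y) B_\omega}$ implies $\pi_\omega(x) \precsim \pi_\omega(y)$ in $B_\omega$. Next I would split into the two cases dictated by whether $x \in J_\omega$. If $x \in J_\omega$, then since $J_i$ is full in $A_i$ (the extensions are purely large, so each $J_i$ is a full ideal --- indeed a full nonzero ideal), and since the ideal generated by $y$ contains $J_\omega$ whenever $y \notin J_\omega$: more precisely, by Proposition \ref{prop:EK} applied coordinatewise, if $\|\pi_i(y_i)\|$ is bounded away from $0$ then every element of $J_i$ is approximable by $x_0 y_i x_0^*$, and a reindexing/diagonal argument across $\omega$ shows $J_\omega$ is contained in the ideal generated by $y$; and if $\pi_\omega(y) = 0$ then $\pi_\omega(x)=0$ too, so both $x,y \in J_\omega$ and we are instead in the situation below with the extension $0 \to J_i \to J_i \to 0 \to 0$, where pure infiniteness of $J_\omega$ (ideals of purely infinite algebras are purely infinite) and $\pi_\omega(x)\precsim\pi_\omega(y)$ being vacuous is replaced by: $x$ in the ideal generated by $y$ directly, using that $J_\omega$ is purely infinite so $x \precsim y$ in $J_\omega$. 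In the remaining case $y \notin J_\omega$, combine: $J_\omega \subseteq \overline{A_\omega y A_\omega}$ (just shown), and $\pi_\omega(x)\precsim\pi_\omega(y)$; lift a sequence of elements implementing the Cuntz subequivalence to $A_\omega$ to write $x \approx_\epsilon r^* y r + j$ for some $r \in A_\omega$ and $j \in J_\omega$, whence $x$ lies in $\overline{A_\omega y A_\omega} + J_\omega \subseteq \overline{A_\omega y A_\omega}$.

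The main obstacle is the step asserting $J_\omega \subseteq \overline{A_\omega\, y\, A_\omega}$ when $y \notin J_\omega$. One cannot simply apply Proposition \ref{prop:EK} in $A_\omega$ directly, because $A_\omega$ is not literally one of the $A_i$ and the extension \eqref{eqn:ultraproduct-extension} need not be purely large. The fix is exactly why the norm bounds in Proposition \ref{prop:EK} were emphasised: lift $y$ to a bounded sequence $(y_i)_i$ with $\|\pi_i(y_i)+J_i\| \to_\omega \|\pi_\omega(y)+J_\omega\| =: \delta > 0$, so that on an $\omega$-large set $\|\pi_i(y_i)+J_i\| \geq \delta/2 > 0$; rescale so this quotient norm is $1$; then for each target element of $J_\omega$, lifted to $(x_i)_i$ with $\|x_i\| \le 1$, apply the quantitative form of Proposition \ref{prop:EK} in each $A_i$ to obtain $x_{0,i} \in J_i$ with $\|x_{0,i}\| = 1$ and $\|x_i - x_{0,i} y_i x_{0,i}^*\| \le \epsilon$; the sequence $(x_{0,i})_i$ is bounded, so defines an element of $J_\omega \subseteq A_\omega$, and passing to the limit along $\omega$ shows the chosen element of $J_\omega$ is within $\epsilon$ of $x_0 y x_0^*$. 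As $\epsilon$ and the element were arbitrary, $J_\omega \subseteq \overline{A_\omega y A_\omega}$. A small amount of care is needed to make the "coordinatewise" application uniform: one should handle a fixed element $z \in J_\omega$ at a time, with a fixed lift and fixed $\epsilon$, which is enough since the ideal is closed.
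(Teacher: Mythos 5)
Your main case is, in substance, the paper's own argument: pure infiniteness of $\prod_{i\to\omega}B_i$ upgrades the hypothesis to $\pi_\omega(x)\precsim\pi_\omega(y)$, you lift an (approximately) implementing element, and you absorb the resulting error in $\prod_{i\to\omega}J_i$ by applying the quantitative form of Proposition \ref{prop:EK} coordinatewise, using the norm control to obtain a bounded sequence and hence an element of the ultraproduct. The only cosmetic differences are that you prove the stronger claim $\prod_{i\to\omega}J_i\subseteq\overline{A_\omega\, y\, A_\omega}$ and work with approximate Cuntz subequivalence (so you can bypass \cite[Lemma 2.2]{KR02}), whereas the paper uses the exact identity $(\pi_\omega(x)-\epsilon)_+=b\pi_\omega(y)b^*$ and only approximates the single correction term $z=aya^*-(x-\epsilon)_+$; these come to the same thing.

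The genuine gap is your handling of the case $\pi_\omega(y)=0$. First, the claim that $\prod_{i\to\omega}J_i$ is purely infinite is unsupported: the purely infinite algebras in sight are the quotients $B_i$ and their ultraproduct, not the $A_i$, so ``ideals of purely infinite algebras are purely infinite'' has nothing to act on, and in the intended application the $J_i$ are stable AF algebras. Second, even granting pure infiniteness, concluding $x\precsim y$ would require knowing that $x$ lies in the ideal generated by $y$, which is precisely the conclusion sought, so the step is circular. In fact no argument can close this case as you have set it up: when the $J_i$ are non-simple the assertion fails. For example, $J=\K\oplus\K$ admits a purely large extension by $\O_2$ (take a Busby map whose two components are injective unital maps into the Calkin algebra; for $a\notin J$ both components of $\overline{aJa^*}$ are then copies of $\K$, jointly full and stable). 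Choosing non-zero positive $x,y\in J$ supported in the two different summands gives $\pi_\omega(x)=\pi_\omega(y)=0$, so the hypothesis holds vacuously, yet $x\notin\overline{A_\omega\, y\, A_\omega}$, since elements of $A$ act as multipliers of $J$ preserving its two ideals. The paper's proof tacitly works only under $\|\pi_\omega(y)\|>0$ (this is exactly what the normalisation $\|y_i+J_i\|\geq\|\pi_\omega(y)\|>0$ presupposes), and that is the only regime in which the proposition is subsequently applied; the correct move is to restrict to $\pi_\omega(y)\neq 0$, i.e.\ $y\notin\prod_{i\to\omega}J_i$, rather than to attempt a separate argument for the degenerate case. (A minor aside: your remark that each $J_i$ is a ``full ideal'' of $A_i$ is off --- fullness of an ideal would force $J_i=A_i$ --- but it plays no role in your argument.)
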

\begin{proof}
	Without loss of generality, we may assume $x$ and $y$ are positive and non-zero. Suppose $\pi_\omega(x)$ is in the ideal generated by $\pi_\omega(y)$. Let $\epsilon > 0$. As each $B_i$ is $\O_\infty$-stable, $\prod_{i\to\omega}B_i$ is purely infinite; see Section \ref{sec:purely-infinite}. Since $\pi_\omega(x)$ is in the ideal generated by $\pi_\omega(y)$ and $\prod_{i\to\omega}B_i$ is purely infinite, we have $\pi_\omega(x) \precsim \pi_\omega(y)$. By \cite[Lemma 2.2]{KR02}, there exists $b \in \prod_{i\to\omega}B_i$ such that $(\pi_\omega(x)-\epsilon)_+ = b\pi_\omega(y)b^*$. Let $a \in \prod_{i\to\omega}A_i$ be a lift of $b$, and set $z := aya^* - (x-\epsilon)_+$. As \eqref{eqn:ultraproduct-extension} is exact,  $z \in \prod_{i\to\omega}J_i$.

Say $y = (y_i)_i$ for $y_i \in A_i$ and $z = (z_i)_i$ for $z_i \in J_i$. We may assume without loss of generality that $\|y_i + J_i\| \geq \|\pi_\omega(y)\| > 0$  and $\|z_i\| \leq \|z\|$ for all $i \in \N$. By Proposition \ref{prop:EK}, there exist $w_i \in A$ such that $\|z_i - w_iy_iw_i^*\| \leq \epsilon$ and $\|w_i\|^2 \leq \|z\|\|\pi_\omega(y)\|^{-1}$ for all $i \in \N$. (This norm estimate for $w_i$ comes from scaling $y_i$ and $z_i$ so that $\|y_i + J_i \| = 1$ and $\|z_i\| = 1$ before applying Proposition \ref{prop:EK}.) Let $w = (w_i)_i \in \prod_{i\to\omega}A_i$. Then
\begin{equation}
	aya^* - wyw^* \approx_\epsilon (x-\epsilon)_+	\approx_\epsilon x.
\end{equation}
Hence, $x$ is in the ideal generated by $y$. 
\end{proof}

We now turn to sufficient conditions for an extension to be purely large. 
To this end, we recall the definition of the \emph{corona factorisation property}, due to Kucerovsky.

\begin{definition}
	A $\rm{C}^*$-algebra $J$ has the \emph{corona factorisation property} if every full projection in $M(J \otimes \K)$ is Murray--von Neumann equivalent to $1_{M(J \otimes \K)}$.
\end{definition}

The corona factorisation property is a fairly weak regularity property for $\rm{C}^*$-algebras and is implied by finite nuclear dimension.

\begin{proposition}[{\cite[Corollary 3.5]{Ro11}}]\label{prop:NucDimCFP}
	Suppose $J$ has finite nuclear dimension. Then $J$ has the corona factorisation property. 
\end{proposition}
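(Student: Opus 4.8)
The statement is \cite[Corollary 3.5]{Ro11}, and the plan is to deduce it from the main theorem of that paper together with a Cuntz-semigroup characterisation of the corona factorisation property. First I would pass to $J \otimes \K$, which changes neither side of the implication, since $\dimnuc(J \otimes \K) = \dimnuc(J)$ and the corona factorisation property is defined through $M(J \otimes \K) = M((J \otimes \K) \otimes \K)$. Then I would invoke the following two facts. (a) If $\dimnuc(J) = n < \infty$, then the Cuntz semigroup $\mathrm{Cu}(J)$ has $n$-comparison; this is the main theorem of \cite{Ro11}. (b) If the Cuntz semigroup of a $\sigma$-unital $\mathrm{C}^*$-algebra has $\omega$-comparison, then the algebra has the corona factorisation property; this is part of the Cuntz-semigroup analysis of the corona factorisation property by Ortega, Perera and Rørdam, building on Kucerovsky and Ng (\cite{KN06}). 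Since $n$-comparison implies $\omega$-comparison, (a) and (b) combine to give the claim.

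The real content lies in (a), and the plan to prove it is to read the comparison property directly off the $(n+1)$-coloured completely positive approximations of $\mathrm{id}_J$. Given positive elements of $J \otimes \K$ witnessing the premise of $n$-comparison, one fixes $\epsilon > 0$ and a c.p.c.\ approximation $\psi \colon J \otimes \K \to F^{(0)} \oplus \cdots \oplus F^{(n)}$, $\phi \colon F^{(0)} \oplus \cdots \oplus F^{(n)} \to J \otimes \K$ with each $\phi^{(j)} := \phi|_{F^{(j)}}$ a c.p.c.\ order zero map, applies it to a suitable function of the first element $a$, and writes $\phi = \phi^{(0)} + \cdots + \phi^{(n)}$ to split the approximation of $a$ into $n+1$ pieces. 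The key point is that comparison is essentially automatic inside the image of an order zero map whose domain $F^{(j)}$ is finite dimensional: passing through the $^*$-homomorphism on the cone associated to $\phi^{(j)}$ (\cite[Corollary 3.1]{WZ09}) and using $c \leq \|c\| \, 1_{F^{(j)}}$ for $c \in F^{(j)}_+$, one controls each piece by a single order zero image of a projection, where the Cuntz semigroup is as manageable as that of $F^{(j)}$. Feeding the $n$-comparison premise through the $n+1$ colours and letting $\epsilon \to 0$ then yields the desired Cuntz subequivalence.

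The main obstacle is precisely this last combinatorial step: correctly matching the ``multiplicity'' in the definition of $n$-comparison to the number of colours, keeping track of the cut-downs $(\,\cdot\, - \epsilon)_+$ and the auxiliary functional calculus needed to turn norm estimates into Cuntz subequivalences, and working throughout with order zero maps rather than genuine $^*$-homomorphisms. Once (a) is established in this way, the deduction of the corona factorisation property via (b) is formal.
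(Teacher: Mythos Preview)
The paper does not give its own proof of this proposition: it is recorded as a direct citation of \cite[Corollary~3.5]{Ro11}, with no argument supplied. So there is nothing in the paper to compare your proposal against beyond the bare reference.

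That said, your outline is an accurate reconstruction of how Robert's corollary is obtained. The two ingredients you isolate --- (a) finite nuclear dimension forces $n$-comparison in the Cuntz semigroup (the main result of \cite{Ro11}), and (b) $\omega$-comparison of the Cuntz semigroup implies the corona factorisation property for $\sigma$-unital algebras (Ortega--Perera--R{\o}rdam) --- are precisely the route Robert takes, and your sketch of (a) via order-zero functional calculus on the colours is the right mechanism. One small point: step (b) needs $\sigma$-unitality, which is not part of the proposition as stated here; in the present paper this is harmless because $J$ is assumed separable wherever the proposition is actually invoked, but if you write this up you should either add the hypothesis or note why it is not needed.
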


We now record a sufficient condition for an extension to be purely large, namely fullness combined with the corona factorisation property for the ideal.  This result is due to Kucerovsky--Ng and can be extracted from the proof of \cite[Theorem 3.5, $(v) \Rightarrow (i)$]{KN06}. For the benefit of the reader, we also include this argument.

\begin{proposition}[{\cite{KN06}}]\label{prop:CFPpurelylarge}
	Suppose $J$ is stable, $\sigma$-unital and has the corona factorisation property. Then every full extension $0\rightarrow J \rightarrow A \rightarrow B\rightarrow 0$ is purely large.  
\end{proposition}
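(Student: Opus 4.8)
The plan is to reduce to a single hereditary subalgebra inside the corona algebra where the corona factorisation property directly applies. Given $a \in A \setminus J$, I need to produce a stable, $\sigma$-unital subalgebra $D \subseteq \overline{aJa^*}$ that is full in $J$. Since the extension is determined by its Busby invariant $\beta : B \to Q(J)$, and $a + J \in B$ maps to $\beta(a+J) \in Q(J)$ which is norm full by fullness of the extension, the element $aa^* \in A$ has image $\beta(aa^*+J) = \beta(a+J)\beta(a+J)^*$ which is a norm full positive element of $Q(J)$. The first step is therefore to pass from $\overline{aJa^*}$ (a hereditary subalgebra of $J$) to a corresponding hereditary subalgebra of $M(J)$, controlled by the positive multiplier $aa^* \in M(J)_+$: indeed $\overline{aa^* J aa^*}$ sits inside $\overline{aJa^*}$ up to the usual tricks with $|a^*| = (aa^*)^{1/2}$, so it suffices to show $\overline{a^*a J a^*a}$ — equivalently the hereditary subalgebra of $J$ cut down by the positive multiplier $c := aa^* \in M(J)_+$ — contains the required $D$.

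Next I would invoke stability of $J$: write $J = J \otimes \K$ and use a system of matrix units $(e_{jk})$ to build, inside $\overline{cJc}$, a copy of a stable algebra. More precisely, the strategy (this is the heart of the Kucerovsky--Ng argument) is to find a sequence of mutually orthogonal elements $(d_n)$ in $\overline{cJc}$, each full in $J$, that can be ``matched up'' into an infinite matrix pattern; the obstruction to doing this is exactly a projection-comparison statement in $M(J)$, and this is where the corona factorisation property enters. Concretely: because $\beta(a+J)$ is norm full in $Q(J)$, a functional-calculus cut-down $q$ of (a lift of) $\beta(a+J)\beta(a+J)^*$ gives, modulo $J$, a full element; by stability and $\sigma$-unitality of $J$ one arranges a full \emph{projection} $P \in M(J)$ supported ``under'' $c$ modulo $J$. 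The corona factorisation property then yields $P \sim 1_{M(J)}$, which is precisely what is needed to absorb the scaling and conclude that the hereditary subalgebra $\overline{cJc}$ contains a full copy of $J \otimes \K$, hence a stable $\sigma$-unital full subalgebra $D$. I would then transport $D$ back into $\overline{aJa^*}$ using $a = v|a^*|$ (polar decomposition in $M(J)^{**}$, with the partial isometry implemented on the relevant hereditary subalgebras) to finish.

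The main obstacle I anticipate is the projection-comparison step: one does not literally have a full projection under $c$ in $M(J)$, only a full positive element modulo $J$, so some care is needed to replace Cuntz-type comparison of positive elements by Murray--von Neumann equivalence of honest projections before the corona factorisation property can be applied. The standard way around this is to work in $M(J \otimes \K)$ from the outset, use that the hereditary subalgebra generated by a full $\sigma$-unital positive element contains a full projection (a stability/$\sigma$-unitality argument), and only then apply the corona factorisation property; keeping track of which algebra each object lives in (and that ``full in $Q(J)$'' really does propagate to ``full modulo $J$ in $M(J)$'') will be the delicate bookkeeping. Everything else — the polar-decomposition transport, the identification $\overline{cJc} \subseteq \overline{aJa^*}$, and the passage $J \cong J \otimes \K$ — is routine once the projection is in hand. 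Since the excerpt says this is extracted from \cite[Theorem 3.5]{KN06}, I would follow that proof's organisation closely, reproducing only the $(v) \Rightarrow (i)$ strand.
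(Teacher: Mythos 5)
Your proposal follows essentially the same route as the paper's proof, which is itself a condensed form of the Kucerovsky--Ng argument you say you would reproduce: reduce to the positive element $aa^*$, upgrade fullness in $Q(J)$ to fullness in $M(J)$ (\cite[Theorem 3.3]{KN06}), realise the hereditary subalgebra $\overline{aJa^*}$ as a corner $pJp$ for a full projection $p\in M(J)$ (\cite[Theorem 3.2]{KN06}), and apply the corona factorisation property to get $p\sim 1_{M(J)}$, so that $\overline{aJa^*}\cong J$ is stable, $\sigma$-unital and full and can itself be taken as $D$. The only slips are cosmetic: the delicate point is not that ``full in $Q(J)$'' gives ``full modulo $J$'' (a tautology) but the genuine theorem that it gives fullness in $M(J)$ when $J$ is stable, and your $a^*a$ versus $aa^*$ bookkeeping should be tidied --- both are covered by the references you plan to follow.
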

\begin{proof}
	Let $a \in A \setminus J$. Without loss of generality, we may assume $a \geq 0$. Fullness of the extension means that the element $a$ defines a full element of $Q(J)$. Hence, $a$ is full in $M(J)$ by \cite[Theorem 3.3]{KN06}. By \cite[Theorem 3.2]{KN06}, we have $\overline{aJa} \cong pJp$ for some full projection $p \in M(J)$. By the corona factorisation property, $p$ is Murray--von Neumann equivalent to $1$ in $M(J)$. Hence,  $pJp \cong J$ is stable and $\sigma$-unital. Taking $D = \overline{aJa}$, we see that the extension is purely large. 
\end{proof}

Propositions \ref{prop:NucDimCFP}  and \ref{prop:CFPpurelylarge} ensure that the extension that appears in Theorem \ref{thm:main} is purely large when $\dimnuc(J) < \infty$.
The following proposition, communicated to the author by Gabe, is an alternative argument for proving that this extension is purely large, using pure infiniteness of the quotient instead of the corona factorisation property of the ideal.
\begin{proposition}\label{prop:PureInf-PureLarge}
	Suppose $B$ is purely infinite and that $J$ is stable and $\sigma$-unital. Then every full extension $0\rightarrow J \rightarrow A \xrightarrow{\pi} B\rightarrow 0$ is purely large.  
\end{proposition}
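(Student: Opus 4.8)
\textbf{Proof proposal for Proposition \ref{prop:PureInf-PureLarge}.}

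The plan is to imitate the structure of the proof of Proposition \ref{prop:CFPpurelylarge}, but replace the appeal to the corona factorisation property by pure infiniteness of $B$. Let $a \in A \setminus J$, and without loss of generality assume $a \geq 0$. Fullness of the extension means $a$ defines a full element of $Q(J)$, so by \cite[Theorem 3.3]{KN06} $a$ is full in $M(J)$, and by \cite[Theorem 3.2]{KN06} we have $\overline{aJa} \cong pJp$ for some full projection $p \in M(J)$. So it suffices to prove that $pJp$ contains a stable, $\sigma$-unital subalgebra that is full in $J$; in fact I expect $pJp$ itself to be stable (hence one can take $D = \overline{aJa}$, which is automatically $\sigma$-unital since $J$ is, and full in $J$ because $p$ is full in $M(J)$).

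To see that $pJp$ is stable, the key point is the following. The projection $\pi(a)$ — more precisely the image of $a$ in $B$, which is nonzero since $a \notin J$ — is a nonzero positive element of the purely infinite algebra $B$. Since $B$ is purely infinite, $\pi(a)$ is properly infinite, and in particular one can find countably many pairwise orthogonal positive elements in $\overline{\pi(a)B\pi(a)}$, each Cuntz equivalent to $\pi(a)$ itself; equivalently, there are isometries-with-mutually-orthogonal-ranges type relations available inside the corona picture. I would lift such a configuration: using that $B$ is purely infinite together with the projectivity-type lifting available for the relevant relations (or working directly with the Cuntz semigroup of $M(J)$ and the exact sequence $0 \to J \to M(J) \to$ relevant quotient), one shows that the full projection $p \in M(J)$ "absorbs infinitely many copies of itself modulo $J$." More concretely: since $B$ is purely infinite, there is a sequence of isometries $s_n$ in $M(J)$ with $s_n^* s_n = p$ and the ranges $s_n s_n^*$ pairwise orthogonal and dominated by $p$, valid modulo $J$; since $J$ is stable we can then correct the defects to get an honest such sequence in $M(J)$. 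This exhibits $pJp$ as containing $\bigoplus_n s_n (pJp) s_n^* \cong \bigoplus_n pJp$, and a standard argument (e.g. using the characterisation of stability via \cite[Theorem 3.2]{KN06} or Hjelmborg–Rørdam) upgrades "$pJp$ absorbs a countable direct sum of copies of itself inside $pJp$, with $p$ full" to "$pJp$ is stable." Fullness of $p$ in $M(J)$ guarantees $pJp$ is full in $J$, and $\sigma$-unitality is inherited from that of $J$.

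The main obstacle I anticipate is the lifting step: pure infiniteness of $B = M(J)/J$ gives the required properly-infinite / orthogonal-copies configuration only in the quotient $Q(J)$ (or $B$), and one must lift it to an honest configuration of isometries or partial isometries in $M(J)$ whose ranges are genuinely orthogonal and genuinely dominated by $p$. This is where stability of $J$ is essential: it provides "enough room" in $M(J)$ to absorb the error terms living in $J$ — morally, $J \otimes \K$ lets one shift copies off to infinity. I would handle this either by invoking Kasparov-type stabilisation / the fact that $M(J)$ for $J$ stable contains a copy of $M(J \otimes \M_\infty)$-style infinite matrix units, or by citing the portion of \cite{KN06} (or \cite{El01}) that already packages "full in $M(J)$ + $J$ stable $\sigma$-unital + the relevant cut-down is properly infinite modulo $J$" into "$\overline{aJa}$ contains a full stable $\sigma$-unital subalgebra." A secondary technical point is being careful that $\overline{aJa}$ (two-sided cut-down) rather than $\overline{aJa^*}$ appears in the definition of purely large — but since $a \geq 0$ these coincide, which is why we reduced to the positive case at the outset.
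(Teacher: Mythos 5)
There is a genuine gap, and it sits exactly at the two points your sketch defers. First, the lifting step: you need honest isometries $s_n\in pM(J)p$ with $s_n^*s_n=p$ and pairwise orthogonal ranges, but pure infiniteness of $B$ only gives information about elements of $B\subseteq Q(J)$, whereas the image $\bar p$ of $p$ in $Q(J)$ need not lie in $B$, and you never establish any Cuntz comparison between $\bar p$ and $\pi(a)$; moreover the proposed fix ("Kasparov-type stabilisation", or "the portion of \cite{KN06} or \cite{El01} that already packages this") does not point to an actual statement that does this work. Second, even granting such isometries, the inference "$pJp$ absorbs countably many orthogonal copies of itself, hence is stable" is not valid as stated: a sequence of isometries with pairwise orthogonal ranges in the multiplier algebra only gives a unital copy of a Cuntz--Toeplitz algebra, i.e.\ proper infiniteness, not stability (a unital properly infinite algebra such as $\O_\infty$ has exactly this structure and is not stable). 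The Hjelmborg--R{\o}rdam criterion requires, for each positive $c$, an element \emph{orthogonal to} $c$ that Cuntz-dominates it, and the elements $s_ncs_n^*$ need not be orthogonal to $c$. More globally, aiming to prove that $\overline{aJa}\cong pJp$ is itself stable is strictly more than the definition of purely large requires, and it is essentially a corona-factorisation-type statement for the particular projection $p$ -- precisely what this proposition is designed to avoid.

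The paper's argument is much lighter and needs only a single element rather than an infinite family: since $\pi(a^2)$ is full in $Q(J)$, $1_{Q(J)}\precsim \pi(a^2)^{\oplus N}$ for some $N$, and pure infiniteness of $B$ (via \cite[Theorem 4.16]{KR00}, all non-zero elements are properly infinite) gives $\pi(a^2)^{\oplus N}\precsim\pi(a^2)$, so there is $x\in Q(J)$ with $x^*\pi(a^2)x=1_{Q(J)}$. Lifting $x$ to $y\in M(J)$, the defect $y^*a^2y-1_{M(J)}$ lies in $J$, and stability of $J$ provides an isometry $v\in M(J)$ compressing it to norm $<1$; then $z=yv(v^*y^*a^2yv)^{-1/2}$ satisfies $z^*a^2z=1_{M(J)}$, so $az$ is an isometry and $D=\overline{azJz^*a}\subseteq\overline{aJa}$ is isomorphic to $J$, hence stable and $\sigma$-unital, and full in $J$. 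In other words, one does not upgrade $\overline{aJa}$ itself to a stable algebra; one simply transports a copy of $J$ into it by an isometry, and the only lifting needed is of one element, with the $J$-error handled by the elementary compression-and-inversion trick. If you want to salvage your route, you would have to replace the stability claim for $pJp$ by exactly this kind of "embed $J$ fully" argument, at which point you have reproduced the paper's proof.
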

\begin{proof}
	View $A \subseteq M(J)$ and $B \subseteq Q(J)$. Let $a \in A_+ \setminus J$. As $\pi(a^2)$ is full in $Q(J)$, there exists $N \in \N$ such that $1_{Q(J)} \precsim \pi(a^2)^{\oplus N}$. By \cite[Theorem 4.16]{KR00}, all non-zero elements in $B$ are properly infinite, so $\pi(a^2)^{\oplus N} \precsim \pi(a^2)$ in $B \otimes \K$ (and also in $Q(J) \otimes \K$). 
	
Since $1_{Q(J)} \precsim \pi(a^2)$, there is $x \in Q(J)$ such that $x^*\pi(a^2)x = 1_{Q(J)}$. Let $y \in M(J)$ be a lift of $x$. Then $y^*a^2y - 1_{M(J)} \in J$. As $J$ is stable, we have $\|v^*(y^*a^2y - 1_{M(J)})v\| < 1$ for some isometry $v \in M(J)$. Since $v^*y^*a^2yv$ is invertible, we have $z^*a^2z = 1_{M(J)}$ where $z = y v(v^*y^*a^2yv)^{-1/2}$. Set $D = \overline{azJz^*a} \subseteq \overline{aJa}$. Then $z^*aDaz = J$. It follows that $D$ is full in $J$. Moreover, $D$ is isomorphic to $J$ because $az$ is an isometry, so $D$ is $\sigma$-unital and stable. Therefore, the extension is purely large.
\end{proof}

\section{The uniqueness theorem}\label{sec:uniqueness}

A key tool in the proof of Theorem \ref{thm:main} is a uniqueness theorem that will provide sufficient conditions for two $^*$-homomorphisms $C_0(0,1] \otimes B \rightarrow A_\omega$ to be unitary equivalent (after passing to a matrix inflation). This uniqueness theorem is deduced from Gabe's work on the classification of $\O_\infty$-stable maps \cite{Ga19}, which reproves and extends Kirchberg's results on the classification of $\O_\infty$-stable algebras \cite{Ki00}. We begin by reviewing the necessary definitions related to actions of topological spaces on $\rm{C}^*$-algebras.    

\subsection{\texorpdfstring{$X$-$\rm{C}^*$-algebras}{X-C*-algebras}}

Given a topological space $X$, write $\O(X)$ for the complete lattice of open subsets of $X$. An \emph{action} of $X$ on a $\rm{C}^*$-algebra $A$ is an order-preserving map $\O(X) \rightarrow \I(A)$. We write $A(U)$ for the image of the open set $U$ under the action. An \emph{$X$-$\rm{C}^*$-algebra} is a $\rm{C}^*$-algebra endowed with an action of $X$.

An $X$-action on a C$^*$-algebra $A$ is said to be \emph{lower-semicontinuous} if the map $\O(X) \rightarrow \I(A)$ preserves infima. In particular, this means that for every $a \in A$ there is a smallest open set $U_a$ such that $a \in A(U_a)$. (In fact, lower-semicontinuity is equivalent to this statement, but we will not need the converse; see \cite[Remark 10.19]{Ga19}.) An $X$-$\rm{C}^*$-algebra is said to be lower-semicontinuous if the action of $X$ is lower-semicontinuous.

Suppose $f:A \rightarrow B$ is a $^*$-homomorphism between $X$-$\rm{C}^*$-algebras. Then $f$ is \emph{$X$-equivariant} if $f(A(U)) \subseteq B(U)$ for all open sets $U$. Suppose that $A$ is a lower-semicontinuous $X$-$\rm{C}^*$-algebra.   Then an $X$-equivariant $^*$-homomorphism $f:A \rightarrow B$ is said to be \emph{$X$-full} if $f(a)$ is full in $B(U_a)$ for every $a \in A$.

Every $\rm{C}^*$-algebra $B$ can be endowed with the canonical action of its primitive ideal space $\prim(B)$. Indeed, the open subsets of $\prim(B)$ are in order-preserving one-to-one correspondence with the ideal of $B$ via the map sending $U$ to the intersection of all primitive ideals in $\prim(B) \setminus U$; see \cite[Theorem 4.1.3]{Ped79}. This action is clearly lower-semicontinuous.

Given an $X$-$\rm{C}^*$-algebra $B$, there is an induced $X$-action on the cone $C_0(0,1] \otimes B$ given by $U \mapsto C_0(0,1] \otimes B(U)$. When endowed with this $X$-action, the cone $C_0(0,1] \otimes B$ is \emph{$X$-equivariantly contractible}, i.e.\ the identity map $\mathrm{id}_{C_0(0,1] \otimes B}$ is homotopic to the zero map via $X$-equivariant $^*$-homomorphisms. There is also an induced $X$-action on the ultrapower $B_\omega$ given by $U \mapsto \overline{B_\omega B(U) B_\omega}$.

Given a hereditary subalgebra $C \subseteq B$, there is an induced $X$-action on $C$ given by $U \mapsto C \cap B(U)$. By Proposition \ref{prop:hereditary}, the co-restriction of an $X$-full $^*$-homomorphism $f:A \rightarrow B$ to a hereditary subalgebra $C \subseteq B$ containing the image of $f$ is an $X$-full $^*$-homomorphism  $A \rightarrow C$.

Given a $^*$-homomorphism $\pi:A \rightarrow B$, an $X$-action on $B$ can be pulled back to an $X$-action on $A$,  by defining $A(U) = \pi^{-1}(B(U))$. Viewed now as a map between $X$-$\rm{C}^*$-algebras, the $^*$-homomorphism $\pi$ is $X$-equivariant by construction. 

\subsection{Uniqueness}

We are now ready to state the required uniqueness theorem, the proof of which is an application of \cite[Theorem F]{Ga19}.

\begin{proposition}\label{thm:OinfClassification}
Let $(A_i)_i$ be a sequence of $\rm{C}^*$-algebras. Let $B$ a separable, nuclear, $\O_\infty$-stable $\rm{C}^*$-algebra. Let $X$ be a topological space acting on $\prod_{i\to \omega}A_i$ and acting lower-semicontinuously on $B$. Equip $C_0(0,1] \otimes B$ with the induced $X$-action.

Suppose $\Phi, \Psi:C_0(0,1] \otimes B \rightarrow \prod_{i\to \omega}A_i$ are $X$-full $^*$-homomorphisms. Then there exists a unitary $u \in \M_{2}(\prod_{i\to \omega}A_i)^\sim$ such that
  \begin{align}
        \Psi(x)\oplus 0 &= u(\Phi(x)\oplus 0)u^*,\quad x\in C_0(0,1]\otimes B.
         \end{align} 

\end{proposition}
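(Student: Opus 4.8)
The plan is to deduce this from Gabe's classification theorem \cite[Theorem F]{Ga19}, which characterises when two $\O_\infty$-stable $^*$-homomorphisms into a sequence algebra (or ultraproduct) are approximately unitarily equivalent, or unitarily equivalent after stabilisation, in terms of an ideal-related $KK$-obstruction together with a fullness condition. The first step is to observe that both $\Phi$ and $\Psi$ are nuclear and $\O_\infty$-stable: since $B$ is nuclear and $\O_\infty$-stable, so is the cone $C_0(0,1]\otimes B$, and any $^*$-homomorphism out of such an algebra inherits these properties (nuclearity of the domain gives nuclearity of the map; $\O_\infty$-stability of the domain gives $\O_\infty$-stability of the map). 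So the hypotheses of Gabe's theorem on the \emph{maps} are met.

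The second step handles the $KK$-obstruction. Here the crucial point is that the domain $C_0(0,1]\otimes B$ is $X$-equivariantly contractible: the identity on $C_0(0,1]\otimes B$ is $X$-equivariantly homotopic to the zero map, as recalled in the excerpt. Consequently the $X$-equivariant $KK$-group $KK^X(C_0(0,1]\otimes B, \,\prod_{i\to\omega}A_i)$ — or whatever refined invariant Gabe's theorem uses, e.g.\ the ideal-related $KK$-theory $\mathcal{KK}^X$ or an appropriate total invariant — vanishes, since any additive homotopy-invariant bifunctor sends a contractible object to $0$. Therefore $[\Phi] = 0 = [\Psi]$ in this group, so in particular $[\Phi] = [\Psi]$, and the $KK$-side hypothesis of Gabe's theorem is automatically satisfied. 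This is exactly why one works with the cone rather than with $B$ itself.

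The third step is to feed the $X$-fullness hypothesis into the theorem to get the conclusion. Gabe's theorem, under nuclearity, $\O_\infty$-stability, the matching of the $KK$-invariant, and an $X$-fullness (or "$X$-absorbing"/"$X$-nondegeneracy") condition on both maps, yields a unitary in the minimal unitisation of $\M_2$ of the target conjugating $\Phi\oplus 0$ to $\Psi\oplus 0$. The passage to $\M_2$ and the $\oplus 0$ is precisely the "stable" form of unitary equivalence that appears in Gabe's statement; one does not expect a unitary in $A_\omega^\sim$ itself without it. I would also need to check that the $X$-action being used on $\prod_{i\to\omega}A_i$ is of the type Gabe's theorem accepts (an action of a topological space, with $B$ — hence the cone — acting lower-semicontinuously, which is given), and that $X$-fullness as defined in the excerpt matches the fullness hypothesis in \cite{Ga19}; by the remark in the excerpt about co-restriction to hereditary subalgebras, and since the induced $X$-action on the cone is the obvious one, this should be a direct translation.

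The main obstacle is bookkeeping rather than mathematics: one must verify that every hypothesis of \cite[Theorem F]{Ga19} is met in exactly the form stated there — in particular that the target $\prod_{i\to\omega}A_i$ (a general ultraproduct, not assumed separable or nuclear) is an allowable target, which is typical for these sequence-algebra uniqueness statements but must be confirmed; that the invariant appearing in Gabe's theorem genuinely vanishes on $X$-equivariantly contractible algebras; and that the unitary can be taken in $\M_2(\,\cdot\,)^\sim$ with the normalisation $\Psi(x)\oplus 0 = u(\Phi(x)\oplus 0)u^*$ rather than merely approximate unitary equivalence. If Gabe's theorem only gives approximate unitary equivalence directly, one then upgrades to genuine unitary equivalence using a standard reindexing/saturation argument in the ultraproduct $\prod_{i\to\omega}A_i$, exploiting that a sequence of unitaries implementing approximate equivalences on an exhausting sequence of finite sets patches together, via $\omega$, to a single unitary in $\M_2(\prod_{i\to\omega}A_i)^\sim$.
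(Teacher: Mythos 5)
Your proposal follows essentially the same route as the paper: invoke Gabe's ideal-related classification \cite[Theorem F]{Ga19}, with the $KK(X;\cdot)$-obstruction killed by $X$-equivariant contractibility of the cone, nuclearity and (strong) $\O_\infty$-stability of the maps inherited from $B$, and $X$-fullness supplying the remaining hypothesis; your final ``reindexing/saturation'' upgrade from asymptotic to exact unitary equivalence is precisely the paper's use of Kirchberg's $\epsilon$-test \cite[Lemma~A.1]{Kir06}, legitimate because $C_0(0,1]\otimes B$ is separable and the codomain is an ultraproduct.

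The one point you flag but leave open (``must be confirmed'') is exactly where the paper does its only real work: $\prod_{i\to\omega}A_i$ is not $\sigma$-unital, so \cite[Theorem F]{Ga19} does not apply to it verbatim. The fix is to use separability of $C_0(0,1]\otimes B$ to find a positive contraction $e\in\prod_{i\to\omega}A_i$ acting as a unit on $\mathrm{Im}(\Phi)$ and $\mathrm{Im}(\Psi)$ (\cite[Lemma 1.16]{BBSTWW}) and to co-restrict both maps to the $\sigma$-unital hereditary subalgebra $C$ generated by $e$, equipped with the induced $X$-action; $X$-fullness survives this co-restriction by Proposition \ref{prop:hereditary}, and the unitary eventually produced in $\M_2(C)^\sim$ lies in $\M_2(\prod_{i\to\omega}A_i)^\sim$, as required. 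A second, smaller bookkeeping point: Theorem F yields asymptotic Murray--von Neumann equivalence of $\Phi$ and $\Psi$, not directly the stabilised unitary equivalence you attribute to it; the paper passes through \cite[Proposition 3.10]{Gabe} to convert this into asymptotic unitary equivalence of $\Phi\oplus 0$ and $\Psi\oplus 0$ before applying the $\epsilon$-test. Neither step is deep, but the $\sigma$-unitalisation via the hereditary subalgebra is the missing ingredient that your outline needs to be a complete proof.
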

\begin{proof}
	Firstly, we replace the codomain $\prod_{i\to \omega}A_i$ with a $\sigma$-unital hereditary subalgebra. Indeed, Since $C_0(0,1] \otimes B$ is separable, there exists a positive contraction $e \in \prod_{i\to\omega}A_i$ that acts as a unit on $\mathrm{Im}(\Phi)$ and $\mathrm{Im}(\Psi)$; see for example \cite[Lemma 1.16]{BBSTWW}. Let $C$ be the hereditary subalgebra of $\prod_{i\to\omega}A_i$ generated by $e$, and equip $C$ with the induced $X$-action. By construction, $C$ is $\sigma$-unital, and the co-restrictions of the maps $\Phi$ and $\Psi$ remain $X$-full by Proposition \ref{prop:hereditary}.
		
We now check that the remaining conditions of \cite[Theorem F]{Ga19} are satisfied: Firstly, $C_0(0,1] \otimes B$ is a separable and exact because $B$ is a separable and nuclear. Secondly, as the action of $X$ on $B$ is lower-semicontinuous, so is the induced action on $C_0(0,1] \otimes B$. Thirdly, in addition to being $X$-full, the maps $\Phi$ and $\Psi$ are nuclear and strongly $\mathcal{O}_{\infty}$-stable because $B$ is nuclear and $\O_\infty$-stable. Finally, $ C_{0}(0,1]\otimes B$ is $X$-equivariantly contractible, so the $^*$-homomorphisms $\Phi$ and $\Psi$ are $X$-equivariantly homotopic to zero, and so $KK(X; \Phi)=KK(X; \Psi)=0$. Therefore, by \cite[Theorem F]{Ga19}, $\Phi$ and $\Psi$ are asymptotically Murray--von Neumann equivalent.
 
Using \cite[Proposition 3.10]{Gabe}, we deduce that $\Phi \oplus 0$ and $\Psi \oplus 0$ are asymptotically unitarily equivalent with the unitaries in the minimal unitisation $\M_2(C)^\sim \subseteq \M_{2}(\prod_{i\to \omega}A_i)^\sim$. As $C_0(0,1] \otimes B$ is separable, asymptotic unitary equivalence of maps from $C_0(0,1] \otimes B$ into an ultraproduct implies unitary equivalence by Kirchberg's $\epsilon$-test (\cite[Lemma~A.1]{Kir06}). Hence, there exists a unitary $u \in \M_2(\prod_{i\rightarrow \omega} A_i)^\sim $ such that for all $x \in C_0(0,1]\otimes B$, $ \Psi(x)\oplus 0 = u(\Phi(x)\oplus 0)u^*$.   
\end{proof}

\subsection{\texorpdfstring{Testing $X$-fullness}{Testing-X-fullness}}

In order to apply Proposition \ref{thm:OinfClassification}, we need some machinery to test whether certain $X$-equivariant maps between $X$-$\rm{C}^*$-algebras are $X$-full. The following proposition shows that for maps $C_0(0,1] \otimes B \rightarrow B_\omega$ it suffices to understand the behaviour on ideals of the form $I_{C_0(0,1]} \otimes I_B$. Recall that $\I(\Phi)$ denotes the induced map on ideals, as defined in Section \ref{sec:ideals}. 

\begin{proposition}\label{prop:fullness}
Let $B$ be a $\rm{C}^*$-algebra and write $X = \prim(B)$. Endow $B$ with the canonical $X$-action and let $C_0(0,1] \otimes B$ and $B_\omega$ have the induced $X$-actions.

Suppose $\Phi:C_0(0,1] \otimes B \rightarrow B_\omega$ is $^*$-homomorphism such that
\begin{equation}\label{eqn:ideals}
\I(\Phi)(I_{C_0(0,1]} \otimes I_B) = \overline{B_\omega I_B B_\omega}
\end{equation}
for any non-zero ideals $I_{C_0(0,1]} \lhd C_0(0,1]$ and $I_B \lhd B$. Then $\Phi$ is $X$-full
\end{proposition}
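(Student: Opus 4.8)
The plan is to unwind the definition of $X$-fullness directly. Fix $0 \neq f \otimes b$ a simple tensor in $C_0(0,1] \otimes B$ (it suffices to handle these, or more generally elementary tensors, and then pass to general elements by continuity/density together with the fact that fullness of $\Phi(a)$ in $B_\omega(U_a)$ only depends on the ideal it generates). First I would identify, for an elementary tensor $a = f \otimes b$, the smallest open set $U_a \subseteq X = \prim(B)$ with $a \in (C_0(0,1] \otimes B)(U_a)$. Under the induced $X$-action $U \mapsto C_0(0,1] \otimes B(U)$, one has $a \in C_0(0,1] \otimes B(U)$ iff $b \in B(U)$, so $U_a = U_b$, the smallest open set whose corresponding ideal contains $b$; equivalently $B(U_a)$ is the ideal of $B$ generated by $b$. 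So the task reduces to showing $\Phi(f \otimes b)$ is full in $B_\omega(U_b) = \overline{B_\omega B(U_b) B_\omega}$.

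Next I would feed the hypothesis \eqref{eqn:ideals} the ideals $I_{C_0(0,1]} = $ the ideal of $C_0(0,1]$ generated by $f$ and $I_B = $ the ideal of $B$ generated by $b$. Then $I_{C_0(0,1]} \otimes I_B$ is precisely the ideal of $C_0(0,1] \otimes B$ generated by $f \otimes b$ (this is the standard fact that ideals of a minimal tensor product with one nuclear factor are generated by elementary tensors; here $C_0(0,1]$ is nuclear, and in any case $f \otimes b$ generates $I_{C_0(0,1]} \otimes I_B$). Hence $\I(\Phi)$ applied to the ideal generated by $f \otimes b$ equals $\overline{B_\omega I_B B_\omega}$. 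Now $I_B$ is the ideal of $B$ generated by $b$, which corresponds to the open set $U_b$, i.e.\ $I_B = B(U_b)$, so $\overline{B_\omega I_B B_\omega} = B_\omega(U_b) = B_\omega(U_a)$. On the other hand, by the definition of $\I(\Phi)$ and Proposition \ref{prop:hereditary} (applied inside $B_\omega$, or just directly), the ideal $\I(\Phi)$ of the ideal generated by $f\otimes b$ is the ideal of $B_\omega$ generated by $\Phi(f \otimes b)$. Thus the ideal of $B_\omega$ generated by $\Phi(f\otimes b)$ equals $B_\omega(U_a)$, which says exactly that $\Phi(f \otimes b)$ is full in $B_\omega(U_a)$.

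Finally I would check that the induced $X$-action makes $\Phi$ automatically $X$-equivariant and extend from elementary tensors to all $a \in C_0(0,1] \otimes B$: a general $a$ lies in $C_0(0,1] \otimes B(U_a)$, and $B(U_a)$ is spanned (as an ideal) by the $b$ occurring with $a$; one shows $U_a = \bigcup U_{f\otimes b}$ over elementary tensors $f\otimes b$ in the ideal generated by $a$, and that the ideal generated by $\Phi(a)$ contains each ideal generated by $\Phi(f\otimes b) = B_\omega(U_{f\otimes b})$ and is contained in $B_\omega(U_a)$ by $X$-equivariance, giving equality. The main obstacle is the bookkeeping in this last step — correctly matching the smallest open set $U_a$ for a non-elementary $a$ with the supremum of the open sets attached to elementary tensors in the ideal it generates, and making sure the hypothesis \eqref{eqn:ideals}, which is stated only for elementary tensor ideals $I_{C_0(0,1]}\otimes I_B$, genuinely suffices. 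Everything else is a direct translation between the ideal-lattice language of $\I(\cdot)$ and the open-set language of the $X$-action.
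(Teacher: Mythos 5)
Your reduction to elementary tensors is fine as far as it goes: for a nonzero elementary tensor $f\otimes b$ the ideal of $C_0(0,1]\otimes B$ generated by $f\otimes b$ is indeed the (closure of the) product ideal $I_f\otimes I_b$, so \eqref{eqn:ideals} gives exactly that the ideal of $B_\omega$ generated by $\Phi(f\otimes b)$ is $\overline{B_\omega I_b B_\omega}=B_\omega(U_{f\otimes b})$; and taking $I_{C_0(0,1]}=C_0(0,1]$ in \eqref{eqn:ideals} does give the $X$-equivariance you need for the upper bound $\I(\Phi)(\mathrm{Ideal}(a))\subseteq B_\omega(U_a)$ for general $a$. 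The genuine gap is the step you dismiss as bookkeeping. The identity $U_a=\bigcup U_{f\otimes b}$, the union taken over elementary tensors in the ideal generated by $a$, is equivalent to saying that the ideal $\mathrm{Ideal}(a)\lhd C_0(0,1]\otimes B$ is the closed sum of the product ideals it contains (equivalently, is generated by the elementary tensors it contains). This is not a formal lattice manipulation: it is a theorem about tensor products with an exact factor, and it is precisely the input the paper's proof runs on --- since $C_0(0,1]$ is exact, every ideal $I\lhd C_0(0,1]\otimes B$ can be written as $\overline{\sum_{\lambda} I^{(\lambda)}_{C_0(0,1]}\otimes I^{(\lambda)}_{B}}$ (see \cite[Corollary 4.6]{Br08}). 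For a non-exact first factor the analogous statement can fail, so without invoking exactness (or nuclearity) of $C_0(0,1]$ you have no control on how many elementary tensors $\mathrm{Ideal}(a)$ contains, and your union $\bigcup U_{f\otimes b}$ could a priori be strictly smaller than $U_a$. Your opening suggestion to pass from elementary tensors to general elements ``by continuity/density'' does not work either, since fullness and the ideal generated by an element are not stable under small perturbations.

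Once you quote this ideal-structure fact the argument closes up and, in fact, simplifies to the paper's proof: for a general $a$, write $\mathrm{Ideal}(a)=\overline{\sum_\lambda I^{(\lambda)}_{C_0(0,1]}\otimes I^{(\lambda)}_B}$, observe that $B(U_a)=\overline{\sum_\lambda I^{(\lambda)}_B}$, and use that $\I(\Phi)$ preserves closed sums of ideals together with \eqref{eqn:ideals} to get $\I(\Phi)(\mathrm{Ideal}(a))=\overline{B_\omega B(U_a)B_\omega}$; the detour through elementary tensors then becomes unnecessary.
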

\begin{proof}
	Let $f \in C_0(0,1] \otimes B$. Let $U_f \in \O(X)$ be the smallest open set with $f \in C_0(0,1] \otimes B(U_f)$. Let $I \lhd C_0(0,1] \otimes B$ be the ideal generated by $f$. Since $C_0(0,1]$ is exact, we have that  $I = \overline{\sum_{\lambda \in \Lambda} I^{(\lambda)}_{C_0(0,1]} \otimes I^{(\lambda)}_{B}}$ for non-zero ideals $I^{(\lambda)}_{C_0(0,1]} \lhd C_0(0,1]$ and $I^{(\lambda)}_{B} \lhd B$; see for example \cite[Corollary 4.6]{Br08}. Then $B(U_f) = \overline{\sum_{\lambda \in \Lambda} I^{(\lambda)}_{B}}$ and, by \eqref{eqn:ideals}, we have 
\begin{equation}
	\I(\Phi)(I)	= \overline{\sum_{\lambda \in \Lambda} \overline{B_\omega I^{(\lambda)}_{B} B_\omega}} = \overline{B_\omega B(U_f) B_\omega}.
\end{equation}
Hence, $\Phi$ is $X$-full. 
\end{proof} 

For maps into purely large extensions of $\O_\infty$-stable algebras, $X$-fullness can be tested in the quotient. This is also true for maps into ultraproducts of purely large extensions of $\O_\infty$-stable algebras. We record this in the following proposition. 

\begin{proposition}\label{prop:fullness6AM}
Let	$(0\rightarrow J_i \rightarrow A_i \xrightarrow{\pi_i} B_i \rightarrow 0)_i$ be a sequence of purely large extensions. Let $X$ be a topological space acting on $\prod_{i \to \omega} B_i$. Equip $\prod_{i \to \omega} A_i$  with the induced $X$-action obtained by pulling back ideals via $\pi_\omega$.

Let $D$ be a lower-semicontinuous $X$-$\rm{C}^*$-algebra and $\Phi:D \rightarrow \prod_{i \to \omega} A_i$ be a $^*$-homomorphism. Then $\Phi$ is $X$-full whenever $\pi_\omega \circ \Phi$ is $X$-full.
\end{proposition}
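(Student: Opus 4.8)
The plan is to deduce this directly from Proposition~\ref{lem:6AM}, which is exactly the ideal‑transfer statement needed, together with the fact that the $X$‑action on $\prod_{i\to\omega}A_i$ is the one pulled back along $\pi_\omega$. Write $A_\omega=\prod_{i\to\omega}A_i$, $B_\omega=\prod_{i\to\omega}B_i$, and recall that $A_\omega(U)=\pi_\omega^{-1}(B_\omega(U))$ for every $U\in\O(X)$. I would also record at the outset the elementary remark that if $I\lhd C$ is an ideal and $b\in I$, then the ideal of $C$ generated by $b$ coincides with the ideal of $I$ generated by $b$ (approximate each $cbd$ by $(ce_\lambda)b(e_\lambda d)$ for an approximate unit $(e_\lambda)$ of $I$); thus ``$b$ is full in $I$'' means precisely $\overline{CbC}=I$.

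First I would check that $\Phi$ is $X$‑equivariant. Since the action on $A_\omega$ is pulled back along $\pi_\omega$, for every open $U$ we have $\Phi(D(U))\subseteq A_\omega(U)$ if and only if $(\pi_\omega\circ\Phi)(D(U))\subseteq B_\omega(U)$, and the latter holds because $\pi_\omega\circ\Phi$ is $X$‑full, hence $X$‑equivariant. Then I would fix $d\in D$ and let $U_d$ be the smallest open set with $d\in D(U_d)$, which exists because $D$ is lower‑semicontinuous; this $U_d$ depends only on $D$, so it is the open set occurring in the $X$‑fullness conditions for both $\Phi$ and $\pi_\omega\circ\Phi$. By $X$‑equivariance, $\Phi(d)\in A_\omega(U_d)$, so the ideal $I:=\overline{A_\omega\Phi(d)A_\omega}$ of $A_\omega$ is contained in $A_\omega(U_d)$.

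For the reverse inclusion, take $z\in A_\omega(U_d)=\pi_\omega^{-1}(B_\omega(U_d))$, so that $\pi_\omega(z)\in B_\omega(U_d)$. By $X$‑fullness of $\pi_\omega\circ\Phi$ together with the opening remark, $B_\omega(U_d)=\overline{B_\omega\,\pi_\omega(\Phi(d))\,B_\omega}$, i.e.\ $\pi_\omega(z)$ lies in the ideal of $B_\omega$ generated by $\pi_\omega(\Phi(d))$. Proposition~\ref{lem:6AM}, applied with $z$ in the role of $x$ and $\Phi(d)$ in the role of $y$, would then give $z\in I$. Hence $I=A_\omega(U_d)$, so $\Phi(d)$ is full in $A_\omega(U_d)$; as $d$ was arbitrary, $\Phi$ is $X$‑full.

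I do not expect a genuine obstacle here: all the analytic substance --- pure infiniteness of $B_\omega$ and the Elliott--Kucerovsky approximation --- is already packaged inside Proposition~\ref{lem:6AM}, and what remains is bookkeeping. The one point that must be handled with care is the double use of the pulled‑back identity $A_\omega(U)=\pi_\omega^{-1}(B_\omega(U))$: once to transfer $X$‑equivariance from $\pi_\omega\circ\Phi$ to $\Phi$, and once to ensure that an arbitrary element of $A_\omega(U_d)$ is carried into $B_\omega(U_d)$ by $\pi_\omega$, which is precisely what makes Proposition~\ref{lem:6AM} applicable.
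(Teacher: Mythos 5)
Your proof is correct and takes essentially the same route as the paper: fix $d$ and $U_d$, use the pulled-back action to note that any element of $A_\omega(U_d)=\pi_\omega^{-1}(B_\omega(U_d))$ maps into the ideal of $\prod_{i\to\omega}B_i$ generated by $\pi_\omega(\Phi(d))$ (by $X$-fullness of $\pi_\omega\circ\Phi$), and then transfer back via Proposition \ref{lem:6AM}; your explicit checks of $X$-equivariance and of the equivalence between fullness in $A_\omega(U_d)$ and generating that ideal in $\prod_{i\to\omega}A_i$ are details the paper leaves implicit. The only caveat, shared with the paper's own proof, is that Proposition \ref{lem:6AM} assumes the quotients $B_i$ are $\O_\infty$-stable, a hypothesis not repeated in the statement here but implicitly in force in the intended applications.
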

\begin{proof}
	Let $d \in D$. Let $U_d \in \O(X)$ be the smallest open set with $d \in D(U_d)$. Let $I \lhd \prod_{i \to \omega} B_i$ be the ideal corresponding to the open set $U_d$. Let $x \in \pi_\omega^{-1}(I)$. Then $\pi_\omega(x) \in I$, which is the ideal generated by $\pi_\omega(\Phi(d))$ since $\pi_\omega \circ \Phi$ is $X$-full. By Proposition \ref{lem:6AM}, we see that $x$ is in the ideal generated by $\Phi(d)$. Therefore, $\Phi$ is $X$-full.
\end{proof}

\section{Nuclear Dimension}\label{sec:nuclear-dimension}

The final ingredient needed for the proof of Theorem \ref{thm:main} are the c.p.c.\ approximations for $\O_\infty$-stable algebras constructed in \cite{BGSW19}, which respect the ideal structure of $B$. We isolate this as a lemma for the benefit of the reader and to fix notation.

\begin{lemma}[{cf. \cite[Lemma 3.5]{BGSW19}}]\label{lem:approximations}
	Let	$B$ be a separable, $\O_\infty$-stable $\rm{C}^*$-algebra. There exists a sequence of c.p.c.\ maps $\phi_i:B \rightarrow B$, which factorise through finite dimensional $\rm{C}^*$-algebras $F_i$ as $\phi_i = \eta_i \circ \psi_i$ for c.p.c.\ maps $\psi_i:B \rightarrow F_i$ and c.p.c.\ order zero maps $\eta_i:F_i \rightarrow B$, such that
\begin{enumerate}[(i)]
	\item The induced map $\psi=(\psi_i)_i:B \rightarrow \prod_{i \to \omega}F_i$ is a c.p.c.\ order zero map.
	\item The induced map $\phi=(\phi_i)_i:B \rightarrow B_\omega$ is a c.p.c.\ order zero map, and the corresponding $^*$-homomorphism $\Phi:C_0(0,1] \otimes B \rightarrow B_\omega$ satisfies 
\begin{equation}
	\I(\Phi)(I_{C_0(0,1]} \otimes I_B) = \overline{B_\omega I_B B_\omega}
\end{equation}
for any non-zero ideals $I_{C_0(0,1]} \lhd C_0(0,1]$ and $I_B \lhd B$.
\end{enumerate}
\end{lemma}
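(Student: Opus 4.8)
The plan is to extract the required system of c.p.c.\ approximations directly from the work of Bosa, Gabe, Sims and White on the nuclear dimension of $\O_\infty$-stable algebras, and then verify that their construction can be arranged to respect the ideal structure in the precise form stated. I would begin by recalling the structure of the approximations from \cite[Lemma 3.5]{BGSW19}: for a separable $\O_\infty$-stable $B$, one obtains finite-dimensional algebras $F_i$, c.p.c.\ maps $\psi_i : B \to F_i$, and c.p.c.\ order zero maps $\eta_i : F_i \to B$ whose composites $\phi_i = \eta_i \circ \psi_i$ converge pointwise to a map which, after passing to the ultrapower, becomes a c.p.c.\ order zero map $\phi : B \to B_\omega$. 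The two order-zero properties in (i) and (ii) are essentially already present in \cite{BGSW19}: property (i), that the sequence $(\psi_i)_i$ assembles into an order zero map $\psi : B \to \prod_{i\to\omega} F_i$, and property (ii), that $\phi$ is order zero, both follow from the fact that the approximations there are built to be approximately order zero (one uses the order zero functional calculus and the fact that the $\psi_i$ and $\eta_i$ are approximately order zero on finite sets, passing to the limit at $\omega$).

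The genuinely new content is the ideal-exhaustion condition in (ii): $\I(\Phi)(I_{C_0(0,1]} \otimes I_B) = \overline{B_\omega I_B B_\omega}$ for all non-zero ideals $I_{C_0(0,1]} \lhd C_0(0,1]$ and $I_B \lhd B$. I would handle the two tensor factors separately. For $I_{C_0(0,1]}$, the point is that every non-zero ideal of $C_0(0,1]$ contains an element of the form $\iota = \mathrm{id}_{(0,1]} \cdot g$ for suitable $g$, and in particular $\I(\Phi)(I_{C_0(0,1]} \otimes I_B)$ does not shrink as $I_{C_0(0,1]}$ shrinks, because $\Phi$ restricted to $C_0(0,\delta] \otimes I_B$ still hits $\phi(a)$ up to scaling via order zero functional calculus; so it suffices to treat $I_{C_0(0,1]} = C_0(0,1]$, i.e.\ to show $\overline{B_\omega \phi(I_B) B_\omega} = \overline{B_\omega I_B B_\omega}$. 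For this one shows $\phi(a)$ generates the same ideal as $a$ for each $a \in B_+$: the inclusion $\overline{B_\omega \phi(a) B_\omega} \subseteq \overline{B_\omega a B_\omega}$ comes from the construction ($\eta_i$ and $\psi_i$ are built inside ideals generated by the relevant elements of $B$, or one uses that $\phi(a) \precsim a$ in the purely infinite algebra $B_\omega$ — careful: $B_\omega$ need not be purely infinite, so instead I would track that the $F_i$-pieces are supported appropriately), and the reverse inclusion is the real issue. The reverse inclusion — that $\phi$ does not kill or shrink any ideal — is exactly the "fullness" of the approximation that \cite{BGSW19} establish in order to run their nuclear dimension argument in the non-simple case, so I would cite that directly, or reproduce the short argument: one shows that for each $a$, the element $\phi(a)$ is full in $\overline{B_\omega I_a B_\omega}$ where $I_a$ is the ideal of $B$ generated by $a$, using that the approximations $\psi_i$ are constructed to be approximately "supported" on a prescribed nuclear $\O_\infty$-embedding that sees all of the ideal structure.

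I expect the main obstacle to be precisely this ideal-fidelity claim, and more specifically checking that it holds uniformly over all pairs of non-zero ideals rather than just over a single pair or a cofinal family. The subtlety is that $\prod_{i\to\omega} F_i$ and $B_\omega$ have enormous ideal lattices, and one must be sure that the ideal generated by $\Phi(I_{C_0(0,1]}\otimes I_B)$ inside $B_\omega$ is computed correctly — this is where the exactness of $C_0(0,1]$ and the tensor-product ideal decomposition (as used in Proposition \ref{prop:fullness}) come in, and where one must be careful that "$\overline{B_\omega I_B B_\omega}$" is literally the ideal of $B_\omega$ generated by the image of $I_B$ under the constant embedding, not some larger object. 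The cleanest route, which I would take, is: first reduce to $I_{C_0(0,1]} = C_0(0,1]$ as above; then reduce to $I_B$ principal, generated by a single $a \in B_+$; then show $\I(\Phi)(C_0(0,1]\otimes \overline{BaB}) = \overline{B_\omega\,a\,B_\omega}$ by a two-sided containment, citing \cite[Lemma 3.5]{BGSW19} (or its proof) for whichever direction is done there and filling in the other with a direct order-zero-functional-calculus estimate; and finally reassemble arbitrary $I_B$ as a closed sum of principal ideals, using continuity of $\I(\Phi)$ on directed sups of ideals. Properties (i) and (ii)'s order-zero assertions I would dispatch quickly at the start by the order zero lifting and limit arguments already recalled in the preliminaries, so that the bulk of the proof is the ideal computation.
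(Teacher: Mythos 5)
Your overall route---extracting everything from \cite[Lemma 3.5]{BGSW19}---is the same as the paper's, but the paper's proof is essentially a one-line citation: \cite[Lemma 3.5]{BGSW19} already contains the ideal condition in (ii) in full (it is stated there relative to a map $\Theta$ on ideal lattices; one simply takes $A=B$ and $\Theta=\id$), so the only work left is notational: the $\psi_i$ in \cite{BGSW19} have domain $C_0(0,1]\otimes B$, so one composes with $b\mapsto \id_{(0,1]}\otimes b$, and \cite{BGSW19} works with $B_{(\infty)}=\ell^\infty(B)/c_0(B)$ rather than $B_\omega$, so one pushes forward along the induced surjection $B_{(\infty)}\rightarrow B_\omega$, which maps $\overline{B_{(\infty)}I_BB_{(\infty)}}$ onto $\overline{B_\omega I_B B_\omega}$. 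You instead treat the ideal-exhaustion condition as new content to be proved on top of \cite{BGSW19}, and it is in that part that your plan has a genuine gap.

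Concretely, your first reduction---``it suffices to treat $I_{C_0(0,1]}=C_0(0,1]$ because $\Phi$ restricted to $C_0(0,\delta]\otimes I_B$ still hits $\phi(a)$ up to scaling via order zero functional calculus''---fails for a general c.p.c.\ order zero map. If, say, $\phi=\tfrac12\rho$ for a $^*$-homomorphism $\rho$, then $\Phi(f\otimes b)=f(\tfrac12)\rho(b)$, so $\Phi$ annihilates $I_{C_0(0,1]}\otimes B$ for every ideal of $C_0(0,1]$ corresponding to an open set avoiding $\tfrac12$; the uniformity over \emph{all} non-zero ideals of $C_0(0,1]$ is precisely the non-trivial ``full spectrum of the support'' feature of the maps constructed in \cite{BGSW19} (compare how, in the proof of Theorem \ref{thm:main}, the analogous property of $\Theta^{(0)}$ and $\Theta^{(1)}$ is derived from the full-spectrum element $x\in\O_\infty$), and it cannot be recovered abstractly; it must be quoted from the statement of \cite[Lemma 3.5]{BGSW19} itself. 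Two smaller points: your parenthetical worry that $B_\omega$ need not be purely infinite is unfounded ($B$ is $\O_\infty$-stable, hence purely infinite, and pure infiniteness passes to ultrapowers, as recalled in Section \ref{sec:purely-infinite}); and your sketch omits the sequence-algebra-versus-ultrapower translation, which, while easy, is genuinely needed to land in $B_\omega$ as stated.
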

\begin{proof}
	The result follows by taking $A = B$ and $\Theta:\I(B)\rightarrow \I(B)$ to be the identity in \cite[Lemma 3.5]{BGSW19}. We alert the reader to a small conflict of notation: In \cite[Lemma 3.5]{BGSW19}, $\psi_i$ are constructed as approximate $^*$-homomorphisms $C_0(0,1] \otimes B \rightarrow F_i$, so one must compose with $b \mapsto \mathrm{id}_{(0,1]} \otimes b$ to get approximate order zero maps $B \rightarrow F_i$. We also note that \cite{BGSW19} uses $B_{(\infty)} := \ell^\infty(B)/c_0(B)$ instead of the ultrapower $B_\omega$. However, the identity on $\ell^\infty(B)$ induces a surjective $^*$-homomorphism $q:B_{(\infty)} \rightarrow B_\omega$ with $\I(q)(\overline{B_{(\infty)} I_B B_{(\infty)}}) = \overline{B_\omega I_B B_\omega}$, so the ultrapower version follows.
\end{proof}

We now proceed with the proof of Theorem \ref{thm:main}. 
\begin{proof}[Proof of Theorem \ref{thm:main}]
We have $\dimnuc(A) \geq \dimnuc(B) = 1$ by \cite[Proposition 2.3]{WZ10} and \cite[Theorem A]{BGSW19}, so we focus on the upper bound.
There is nothing to prove in the case that $\dimnuc(J)$ is infinite, so we assume hereinafter that $n := \dimnuc(J) < \infty$. Let $\pi:A \rightarrow B$ denote the quotient map of the extension. Fix a c.p.c.\ splitting $\mu:B \rightarrow A$, which exists by the Choi-Effros lifting theorem \cite{Choi-Effros}. Write $\pi_\omega:A_\omega \rightarrow B_\omega$ and $\mu_\omega:B_\omega \rightarrow A_\omega$ for the induced maps at the level of the ultrapowers.

Fix a unital embedding $\iota_{\O_\infty}:\O_\infty \hookrightarrow (B_\omega \cap B') / \mathrm{Ann}(B)$, which exists as $B$ is $\O_\infty$-stable and separable \cite{Kir06}. Let $\iota_{\O_\infty  \otimes B}:\O_\infty  \otimes B \hookrightarrow B_\omega$ be the embedding obtained by combining $\iota_{\O_\infty}$ and the canonical embedding  $B \hookrightarrow B_\omega$. Fix a positive contraction $x \in \O_\infty$ with full spectrum. Let $k \in B_\omega \cap B'$ be a lift of $\iota_{\O_\infty}(x)$ and say $k=(k_i)_i$ where $k_i \in B$ are positive contractions.

Let $(h_i)_i$ be a quasicentral approximate unit for the extension with $h_{i+1}h_i = h_i$ for all $i \in \N$. As in the proof of \cite[Theorem 1]{Gla20}, by passing to a subsequence, we may assume without loss of generality that
\begin{align}\label{eqn:subsequence3}
    \|h_i\mu(bk_i)-\mu(bk_i)h_i\| &\rightarrow 0, \nonumber \\
    \|h_{i+1}\mu(bk_i)-\mu(bk_i)h_{i+1}\| &\rightarrow 0,\nonumber \\
    \|(1-h_{i+1})[\mu(b)\mu(b'k_i)-\mu(bb'k_i)]\| &\rightarrow 0,\nonumber \\
    \|(1-h_{i+1})[\mu(bk_i)\mu(b'k_i)-\mu(bk_i b'k_i)]\| &\rightarrow 0 
 \end{align}
for all $b, b' \in B$. 

We now define sequences of c.p.c.\ maps $\alpha_i:A \rightarrow A$, $\theta^{(0)}_i:B \rightarrow A$, and $\theta^{(1)}_i:B \rightarrow A$ by setting
 \begin{align}\label{eqn:maps}
     \alpha_i(a)&= h_i^{\frac{1}{2}} a h_i^{\frac{1}{2}}, \nonumber\\
    \theta^{(0)}_i(b)&= (h_{i+1}-h_i)^{\frac{1}{2}}\mu(b)(h_{i+1}-h_i)^{\frac{1}{2}}\nonumber\\
    &\quad\quad+(1-h_{i+1})^{\frac{1}{2}}\mu(k_i^{\frac{1}{2}}b k_i^{\frac{1}{2}})(1-h_{i+1})^{\frac{1}{2}}, \notag \\
    \theta^{(1)}_i(b)&=(1-h_{i+1})^{\frac{1}{2}}\mu((1-k_i)^{\frac{1}{2}}b (1-k_i)^{\frac{1}{2}})(1-h_{i+1})^{\frac{1}{2}},
 \end{align}
for $a\in A$ and $b\in B$. Note that we do not assume that $A$ and $B$ are unital: the occurrences of units in \eqref{eqn:maps} can be taken to be in the minimal unitisations of $A$ and $B$.	
By the same calculations as in the proof of \cite[Theorem 1]{Gla20}, the induced maps $\alpha:A \rightarrow A_\omega$, $\theta^{(0)}:B\rightarrow A_\omega$ and $\theta^{(1)}:B\rightarrow A_\omega$ are all order zero, and the sum $\alpha + \theta^{(0)} \circ \pi + \theta^{(1)} \circ \pi$ coincides with the canonical embedding $A \rightarrow A_\omega$ via constant sequences.

Moreover, for $b \in B$, we compute that 
\begin{align}\label{eqn:pi-omega-1}
\pi_\omega(\theta^{(0)}(b)) &= \pi_\omega(\mu_\omega(k^{\tfrac{1}{2}}bk^{\tfrac{1}{2}})) \notag\\
&= \pi_\omega(\mu_\omega(kb)) \notag\\ 
&= kb \notag\\ 
&= \iota_{\O_\infty  \otimes B}(x \otimes b),
\end{align}
where we use that $h_i, h_{i+1} \in J$ in the first step, that $k \in B_\omega \cap B'$ in the second step, and that $\mu$ is a splitting of $\pi$ in the third step. Similarly, $\pi_\omega(\theta^{(1)}(b)) = b-kb = \iota_{\O_\infty  \otimes B}((1-x) \otimes b)$.

As $\dimnuc(J) < \infty$, $J$ has the corona factorisation property by Proposition \ref{prop:NucDimCFP}. Hence, by Proposition \ref{prop:CFPpurelylarge}, the extension $0\rightarrow J \rightarrow A \rightarrow B\rightarrow 0$ is purely large. (Alternatively, note that $B$ is purely infinite and use Proposition \ref{prop:PureInf-PureLarge}.) For each $i \in \N$, define hereditary subalgebras $A_i = \overline{(1-h_{i+1})A(1-h_{i+1})} \subseteq A$ and $J_i = \overline{(1-h_{i+1})J(1-h_{i+1})} \subseteq J$. By Proposition \ref{prop:cutdown}, each extension $0\rightarrow J_i \rightarrow A_i \rightarrow B\rightarrow 0$ is purely large. 

Set $X = \prim(B)$. View $B$ as an $X$-$\rm{C}^*$-algebra with the canonical $X$-action. Let $C_0(0,1] \otimes B$ and $B_\omega$ have the induced $X$-actions. Equip $A_\omega$ with the induced $X$-action obtained by pulling back ideals via $\pi_\omega:A_\omega \rightarrow B_\omega$, and equip $\prod_{i\to\omega} A_i$ with the induced $X$-action obtained by pulling back ideals via the restriction of $\pi_\omega$.

Let $\Theta^{(0)}:C_0(0,1] \otimes B \rightarrow A_\omega$ be the $^*$-homomorphism corresponding to the c.p.c.\ order zero map $\theta^{(0)}$. Noting that the image of $\theta^{(1)}_i$ is contained in $A_i$, we view $\theta^{(1)}$ as a c.p.c.\ order zero map $B \rightarrow \prod_{i\to\omega} A_i$ and let $\Theta^{(1)}:C_0(0,1] \otimes B \rightarrow \prod_{i\to\omega} A_i$ be the corresponding $^*$-homomorphism.

The goal is now to prove that $\Theta^{(0)}$ and $\Theta^{(1)}$ are $X$-full using Propositions \ref{prop:fullness} and \ref{prop:fullness6AM}. This will require some preliminary computations.
By \eqref{eqn:pi-omega-1}, we have $\pi_\omega \circ \Theta^{(0)}(\mathrm{id}_{(0,1]} \otimes b) = \iota_{\O_\infty  \otimes B}(x \otimes b)$ for all $b \in B$. Since both $\pi_\omega \circ \Theta^{(0)}$ and $\iota_{\O_\infty  \otimes B}$ are $^*$-homomorphisms, we deduce that  $\pi_\omega \circ \Theta^{(0)}((\mathrm{id}_{(0,1]})^n \otimes b^n) = \iota_{\O_\infty  \otimes B}(x^n \otimes b^n)$ for all $b \in B$, from which we deduce that
\begin{align}\label{eqn:pi-omega-2}
\pi_\omega \circ \Theta^{(0)}(f \otimes b) &= \iota_{\O_\infty  \otimes B}(f(x) \otimes b) = f(k)b
\end{align}
for all $f \in C_0(0,1]$ and all $b \in B$. In a similar manner, we deduce that $\pi_\omega \circ \Theta^{(1)}(f \otimes b) = \iota_{\O_\infty  \otimes B}(f(1-x) \otimes b)$ for all $f \in C_0(0,1]$ and all $b \in B$.

Let $I_{C_0(0,1]} \lhd C_0(0,1]$ and $I_B \lhd B$ be non-zero ideals. Let $f \in I_{C_0(0,1]}$ be a non-zero positive contraction and let $b \in I_B$. As $x$ is a positive contraction of full spectrum, then $f(x) \neq 0$. Since $x \in \O_\infty$ and $\O_\infty$ is simple and purely infinite, there exists $y \in \O_\infty$ such that $yf(x)y^* = 1_{\O_\infty}$. For any approximate identity $(z_j)_j$ for $B$, we have 
\begin{equation}\label{eqn:get-to-1-otimes-b}
	 1_{O_\infty} \otimes b = \lim_{j \to \infty} (y \otimes z_j)(f(x) \otimes b)(y^* \otimes z_j). 
\end{equation}
Applying the $^*$-homomorphism $\iota_{\O_\infty  \otimes B}$ to \eqref{eqn:get-to-1-otimes-b} and using \eqref{eqn:pi-omega-2} 
\begin{align}
b &= \lim_{j \to \infty} \iota_{\O_\infty  \otimes B}(y \otimes z_j) \; \iota_{\O_\infty  \otimes B}(f(x) \otimes b) \; \iota_{\O_\infty  \otimes B}(y^* \otimes z_j) \notag \\
&= \lim_{j \to \infty} \iota_{\O_\infty  \otimes B}(y \otimes z_j) \; \left(\pi_\omega \circ \Theta^{(0)}(f \otimes b) \right)\; \iota_{\O_\infty  \otimes B}(y^* \otimes z_j).
\end{align}
Hence, $b \in \I(\pi_\omega \circ \Theta^{(0)})(I_{C_0(0,1]} \otimes I_B)$. Since $b \in I_B$ was arbitrary, it follows that $\I(\pi_\omega \circ \Theta^{(0)})(I_{C_0(0,1]} \otimes I_B) \supseteq \overline{B_\omega I_B B_\omega}$. The reverse inclusion follows from \eqref{eqn:pi-omega-2}, so $\I(\pi_\omega \circ \Theta^{(0)})(I_{C_0(0,1]} \otimes I_B) = \overline{B_\omega I_B B_\omega}$. The same argument applied to $\pi_\omega \circ \Theta^{(1)}$, using that $1-x$ is also a positive contraction of full spectrum, gives $\I(\pi_\omega \circ \Theta^{(1)})(I_{C_0(0,1]} \otimes I_B) = \overline{B_\omega I_B B_\omega}$. By Proposition \ref{prop:fullness}, $\pi_\omega \circ \Theta^{(0)}$ and $\pi_\omega \circ \Theta^{(1)}$ are $X$-full. Therefore, the maps $\Theta^{(0)}$ and $\Theta^{(1)}$ are $X$-full by Proposition \ref{prop:fullness6AM}.

Fix finite dimensional approximations $\phi_i = \eta_i \circ \psi_i$ for $B$ as given by Lemma \ref{lem:approximations}, and adopt the notation of Lemma \ref{lem:approximations} hereinafter. The map $\Phi:C_0(0,1] \otimes B \rightarrow B_\omega$ is $X$-full by Proposition \ref{prop:fullness}. For each $i \in \N$, there exists a c.p.c.\ order zero lift $\hat{\eta}_i:F_i \rightarrow A_i$ of the c.p.c.\ order zero map $\eta_i:F_i \rightarrow B$ by Proposition \ref{prop:oz-lifting}. Set $\hat{\phi}_i = \hat{\eta}_i \circ \psi_i$ and let $\hat{\phi}:B \rightarrow \prod_{i\to\omega} A_i$ be the induced map. Write $\hat{\Phi}:C_0(0,1] \otimes B \rightarrow \prod_{i\to\omega} A_i$ for the corresponding $^*$-homomorphism from the cone. Since $\pi_\omega \circ \Phi = \hat{\Phi}$, we see that $\hat{\Phi}$ is $X$-full by Proposition \ref{prop:fullness6AM}. 

By Proposition \ref{thm:OinfClassification}, there exist unitaries $u \in \M_2(A_\omega)^\sim$ and $v \in \M_2(\prod_{i\to\omega} A_i)^\sim$ such that 
\begin{align}\label{eqn:unitary-twist}
\operatorname{Ad}(u)\circ (\hat{\Phi} \oplus 0)&=\Theta^{(0)}\oplus 0,\nonumber\\
\operatorname{Ad}(v)\circ (\hat{\Phi} \oplus 0)&=\Theta^{(1)}\oplus 0.
\end{align}
In particular, $\operatorname{Ad}(u)\circ(\hat{\phi} \oplus 0)=\theta^{(0)}\oplus 0$ and  $\operatorname{Ad}(v)\circ (\hat{\phi} \oplus 0)=\theta^{(1)}\oplus 0$. Write $u = (u_i)$ and $v = (v_i)$ for unitaries $u_i \in \M_2(A)^\sim$ and $v_i \in \M_2(A_i)^\sim$.

Since $\overline{h_iAh_i}$ is a hereditary subalgebra of $J$, we have $\dimnuc(\overline{h_iAh_i}) = \dimnuc(J) = n$ by \cite[Proposition 2.5]{WZ10}. Therefore, there exist finite-dimensional algebras $G_i^{(j)}$, c.p.c.\ maps $\beta_i:\overline{h_iAh_i} \rightarrow G_i^{(0)} \oplus \ldots \oplus  G_i^{(n)}$, and c.p.\ maps $\gamma_i: G_i^{(0)} \oplus \ldots \oplus  G_i^{(n)} \rightarrow \overline{h_iAh_i}$ with every restriction $\gamma_i|_{G_i^{(j)}}$ c.p.c. order zero such that $\|\alpha_i(a) - \gamma_i(\beta_i(\alpha_i(a)))\| \rightarrow 0$ for all $a \in A$, using Definition \ref{def:DimNuc} and the fact that $A$ is separable.

We now define completely positive approximations
\begin{equation}
\begin{tikzpicture}[node distance = 8cm, auto, baseline=(current  bounding  box.center)]
\node (B) {$A$};
\node(temp) [node distance = 4cm, right of = B] {};
\node (prod)[right of = B] {$\M_2(A)$};
\node (CB)[node distance = 2cm, below of = temp] {$F_i \oplus F_i \oplus G_i^{(0)} \oplus \cdots  \oplus G_i^{(n)}$,};

\draw[->] (B) to node[swap] {$\rho_i$} (CB);
\draw[->] (CB) to node[swap] {$\chi_i$} (prod);
\draw[->] (B) to node {$\operatorname{id} \oplus 0$} (prod);
\end{tikzpicture}
\end{equation} 
where 
\begin{align}
	\rho_i(a) &= (\psi_i(\pi(a)), \psi_i(\pi(a)), \beta_i(\alpha_i(a)))\\
	\chi_i(x,y,z) &= \operatorname{Ad}(u_i)(\hat{\eta}_i(x) \oplus 0) + \operatorname{Ad}(v_i)(\hat{\eta}_i(y) \oplus 0) + \gamma_i(z) \oplus 0. \nonumber
\end{align}
for $a \in A$, $x,y \in F_i$ and $z \in G_i^{(0)} \oplus \cdots  \oplus G_i^{(n)}$.

By \eqref{eqn:unitary-twist} and the fact that $\alpha + \theta^{(0)} \circ \pi + \theta^{(1)} \circ \pi$ coincides with the canonical embedding $A \rightarrow A_\omega$, we have $\lim_{i \to \omega}\chi_i(\rho_i(a)) = a$ for all $a \in A$. The restriction of $\chi_i$ to each summand is order zero. Moreover, as $\overline{h_iAh_i}$ is always orthogonal to $A_i$, we have that $\chi_i$ restricted to $(F_i \oplus G_i^{(0)})$ is order zero. Therefore, $\operatorname{id}_A \oplus \, 0$ has nuclear dimension at most $n+1$.
 
As $A$ is a hereditary subalgebra of $\M_2(A)$ and $\mathrm{id}_A$ is the co-restriction of $\operatorname{id}_A\oplus\ 0$ to $A$, we have that $\mathrm{id}_A$ has nuclear dimension at most $n+1$ by \cite[Proposition 1.6]{BGSW19}. Hence, $\dim_{\mathrm{nuc}}(A)\leq n+1$. This completes the proof of the upper bound. 
When $J$ is an AF algebra, $\dimnuc(J) = 0$. Hence,  $1 \leq \dim_{\mathrm{nuc}}(A)\leq 1$. Therefore, $\dim_{\mathrm{nuc}}(A) = 1$.
\end{proof}

\end{document}